\documentclass{amsart}
\usepackage[bookmarksnumbered,colorlinks, plainpages]{hyperref}
\hypersetup{colorlinks=true,linkcolor=red, anchorcolor=green, citecolor=cyan, urlcolor=red, filecolor=magenta, pdftoolbar=true}
\usepackage{cite}

\usepackage{subfig}
\usepackage{tikz}
\usetikzlibrary{arrows}

\usepackage{array}

\usepackage{amsmath, amsthm, amscd, amsfonts, amssymb, graphicx, color, enumerate, xcolor, mathrsfs, latexsym}

\newcommand{\Z}{\mathbb{Z}}

\newcommand{\C}{\mathcal{C}}
\newcommand{\F}{\mathcal{F}}
\newcommand{\I}{\mathcal{I}}
\newcommand{\M}{\mathcal{M}}
\renewcommand{\P}{\mathcal{P}}
\newcommand{\Q}{\mathcal{Q}}
\newcommand{\T}{\mathcal{T}}

\renewcommand{\S}{\mathbf{S}}
\newcommand{\1}{\mathbf{1}}

\renewcommand{\leq}{\leqslant}
\renewcommand{\geq}{\geqslant}

\newcommand{\ssp}{\hspace{1.8mm}}

\numberwithin{equation}{section}

\newtheorem{theorem}{Theorem}[section]
\newtheorem{lemma}[theorem]{Lemma}
\newtheorem{proposition}[theorem]{Proposition}
\newtheorem{corollary}[theorem]{Corollary}

\theoremstyle{definition}
\newtheorem{conjecture}[theorem]{Conjecture}

\newtheorem{example}[theorem]{Example}

\theoremstyle{remark}

\newtheorem*{acknowledgment}{Acknowledgment}

\begin{document}
\title{Lattice paths inside a table I}

\author[D. Yaqubi]{D. Yaqubi}
\address{Faculty of Agriculture and Animal Science, University of Torbat-e Jam, Iran.}
\email{daniel\_yaqubi@yahoo.es}

\author[M. Farrokhi D. G.]{M. Farrokhi D. G.}
\address{Department of Mathematics, Institute for Advanced Studies in Basic Sciences (IASBS), and the Center for Research in Basic Sciences and Contemporary Technologies, IASBS, Zanjan 66731-45137, Iran}
\email{m.farrokhi.d.g@gmail.com\\farrokhi@iasbs.ac.ir}

\author[H. Ghasemian Zoeram]{H. Ghasemian Zoeram}
\address{IDSIA, Lugano, Switzerland}
\email{hamed@idsia.ch}

\keywords{Lattice path, lattice animals, directed animal, Dyck path, Fibonacci number, Pell-Lucas number, Motzkin number, Hankel matrix}
\subjclass[2010]{Primary 05A15; Secondary 11B37, 11B39.}

\begin{abstract}
A lattice path in $\Z^d$ is a sequence $\nu_1,\nu_2,\ldots,\nu_k\in\Z^d$ such that the steps $\nu_i-\nu_{i-1}$ lie in a subset $\S$ of $\Z^d$ for all $i=2,\ldots,k$. Let $T_{m,n}$ be the $m\times n$ table in the first area of the $xy$-axis and put $\S=\{(1,1),(1,0),(1,-1)\}$. Accordingly, let $\I_m(n)$ denote the number of lattice paths starting from the first column and ending at the last column of $T$. We will study the numbers $\I_m(n)$ and give explicit formulas for special values of $m$ and $n$. As a result, we prove a conjecture of \textit{Alexander R. Povolotsky} involving $\I_n(n)$. Finally, we present some relationships between the number of lattice paths and Fibonacci and Pell-Lucas numbers, and pose an open problem.
\end{abstract}

\maketitle
\section{Introduction}
A \textit{lattice path} $L$ in $\Z^d$ is any sequence $\nu_1,\nu_2,\ldots,\nu_k$ of points of $\Z^d$ (see \cite{ck,ck-sgm}). The vectors $\nu_2-\nu_1,\nu_3-\nu_2,\ldots,\nu_k-\nu_{k-1}$ are called the \textit{steps} of $L$. Lattice paths are studied by fixing a set of steps and an area $U\subseteq\Z^d$ where the paths live in. A typical problem to carry out is to count possible lattice paths in the given area $U$ and a given length with steps in a given set $\S\subseteq\Z^d$.

Lattice paths and more generally lattice animals have deep roots in Physics and appear in the study of thermodynamic models, phase transitions, statistical physics, lattice gas models, river networks, etc. (see \cite{sf-yv} for example). A typical problem there is modeling a physical phenomena, say the motion of gas molecules, as paths inside a (triangular, square, hexagonal, etc.) lattice and study the behavior of the paths. A main question to ask is to give exact formulas or asymptotic results for the number of lattice paths (animals) satisfying some constraints. For example, it is shown that the number $a_n$ of directed animals of size $n$ satisfies 
\[a_n\sim\mu^nn^{-\theta}\]
for some constants $\mu$ and $\theta$ in various models. For a through study of $2$-dimensional lattices in Physics we refer the interested reader to \cite{ab12, vkb-hlb-ys, mb, arc, arc-rb-ajg, dd82, dd-mkp-mb, jamsd, dg-gv, vh-jpn, pgm, jpn-bd-jv, ijz}, and to \cite{dd83, th-gs, sl-sm, ymm-gs11, ymm-gs13, pgm, mkw, ijz} for higher dimensions.  We also refer to \cite{ab13, eb-pr-rs:94, eb-ad-ep-rp, gf-vp, yl-jm, sr-zry, hjr, sgw-ces} for further results. Gouyou-Beauchamps and Viennot \cite{dg-gv} give a bijection between compact-rooted directed lattice animals on two-dimensional square lattice with some lattice paths in the plane. Later Bousquet-M\'{e}lou and Conway \cite{mb-arc} and Corteel, Denise, and Gouyou-Beauchamps \cite{sc-ad-dg} give bijective proofs to obtain algebraic equations satisfied by area generating function of directed lattice animals on infinite families of two-dimensional lattices. Recall that a \textit{lattice animal} is a set of points in a lattice, which is a union of some lattice paths starting from a single point (or set of points in some contexts).

Lattice paths also arise naturally in various problems in mathematics and are well-studied in the literature. The general theory studies the analytic behavior of the complex generating function of the paths as well as giving estimations of the number of paths of a given length, etc. (see \cite{cb-pf, pf-rs,wp-wk} for instance). Particular lattice paths have received much attentions and studied extensively. The very important paths to mention are Dyck paths and Motzkin paths. A \textit{Dyck path} is a lattice path in $\Z^2$ starting from $(0,0)$ and ending at a point $(2n,0)$ ($n\geq0$) consisting of up-steps $(1,1)$ and down-steps $(1,-1)$, which never passes below the $x$-axis. The \textit{Catalan numbers} $\C_n=\frac{1}{n+1}\binom{2n}{n}$, a ubiquity in various combinatorial problems, count the number of Dyck paths of length $2n$ (see \cite{rpg, tk09, tk14, sr, rs} for details). Allowing the right steps $(1,0)$ in addition to those of a Dyck path, we get Motzkin paths starting from $(0,0)$ and ending at a point $(n,0)$, which never pass below the $x$-axis.

Throughout this paper, $T_{m,n}$ stands for the $m\times n$ table in the first quadrant composed of $mn$ unit squares, whose $(x,y)$-cell is located in the $x^{th}$-column from the left side and the $y^{th}$-row from the bottom side of $T_{m,n}$. Also, for a set $\S\subseteq\Z^d$ of steps, $l((i,j)\to (s,t);\S)$ denotes the number of all lattice paths in $T_{m,n}$ starting form the $(i,j)$-cell and ending at the $(s,t)$-cell with steps in $\S$, where $1\leq i,s\leq n$ and $1\leq j,t\leq m$. 

The paths we shall study in this paper use the same set $\S=\{(1,1),(1,0),(1,-1)\}$ of steps as Motzkin paths but live in a bounded rectangular area, which we may assume to be $T_{m,n}$. Notice that the number $l((1,1)\to (n,1);\S)$ of all lattice paths in the table $T_{m,n}$ starting from the $(1,1)$-cell and ending at the $(n,1)$-cell using Motzkin steps namely is the \textit{$n^{th}$-Motzkin number} provided that $m\geq n$. The number of all lattice paths is denoted by $\I_m(n)$. Indeed, 
\[\I_m(n)=\sum_{i,j=1}^m l((1,i)\to(n,j);\S).\]
Figure \ref{Fig:1} shows the number of all lattice paths for $m=2$ and $n=3$. Clearly, $l((1,i)\to(n,j))=l((1,i')\to(n,j'))$ when $i+i'=m+1$ and $j+j'=m+1$.
 
\begin{figure}[h!]
\begin{tikzpicture}[scale=1]
\draw[step=1cm] (0,0) grid (3,2);
\draw[o-stealth] (0.5, 0.5) -- (1.5, 0.5);
\draw[o-stealth] (1.5, 0.5) -- (2.5, 0.5);
\end{tikzpicture}
\begin{tikzpicture}[scale=1]
\draw[step=1cm] (0,0) grid (3,2);
\draw[o-stealth] (0.5, 0.5) -- (1.5, 0.5);
\draw[o-stealth] (1.5, 0.5) -- (2.5, 1.5);
\end{tikzpicture}
\begin{tikzpicture}[scale=1]
\draw[step=1cm] (0,0) grid (3,2);
\draw[o-stealth] (0.5, 0.5) -- (1.5, 1.5);
\draw[o-stealth] (1.5, 1.5) -- (2.5, 0.5);
\end{tikzpicture}
\begin{tikzpicture}[scale=1]
\draw[step=1cm] (0,0) grid (3,2);
\draw[o-stealth] (0.5, 0.5) -- (1.5, 1.5);
\draw[o-stealth] (1.5, 1.5) -- (2.5, 1.5);
\end{tikzpicture}
\\
\begin{tikzpicture}[scale=1]
\draw[step=1cm] (0,0) grid (3,2);
\draw[o-stealth] (0.5, 1.5) -- (1.5, 1.5);
\draw[o-stealth] (1.5, 1.5) -- (2.5, 1.5);
\end{tikzpicture}
\begin{tikzpicture}[scale=1]
\draw[step=1cm] (0,0) grid (3,2);
\draw[o-stealth] (0.5, 1.5) -- (1.5, 1.5);
\draw[o-stealth] (1.5, 1.5) -- (2.5, 0.5);
\end{tikzpicture}
\begin{tikzpicture}[scale=1]
\draw[step=1cm] (0,0) grid (3,2);
\draw[o-stealth] (0.5, 1.5) -- (1.5, 0.5);
\draw[o-stealth] (1.5, 0.5) -- (2.5, 1.5);
\end{tikzpicture}
\begin{tikzpicture}[scale=1]
\draw[step=1cm] (0,0) grid (3,2);
\draw[o-stealth] (0.5, 1.5) -- (1.5, 0.5);
\draw[o-stealth] (1.5, 0.5) -- (2.5, 0.5);
\end{tikzpicture}
\caption{All lattice paths in $T_{2,3}$.}
\label{Fig:1}
\end{figure} 

We intend to evaluate $\I_m(n)$ for special cases of $(m,n)$. In section 2, we obtain $\I_m(n)$ when $m\geq n$. Also, we prove a conjecture of \textit{Alexander R. Povolotsky} posed in OEIS sequence \href{https://oeis.org/A081113}{A081113} . In section 3, we shall compute $\I_m(n)$ for small values of $m$, namely $m=1,2,3,4$ as well as presenting some results for $\I_5(n)$. Finally, we use Fibonacci and Pell-Lucas numbers to prove some relations concerning lattice paths.
\section{$\I_n(n)$ vs Alexander R. Povolotsky's conjecture}\label{S}
Let $\S:=\{(1,1),(1,0),(1,-1)\}$. For positive integers $1\leq i,t\leq m$ and $1\leq s\leq n$, the number of all lattice paths from the $(1,i)$-cell to the $(s,t)$-cell in the table $T=T_{m,n}$ is denoted by $\C^i(s,t)$, that is, $\C^i(s,t)=l((1,i)\to(s,t);\S)$. Also, we put
\[\C_{m,n}(s,t)=\sum_{i=1}^m\C^i(s,t).\]
In case we are working in a single table, say $T$ as above, to avoid confusion we may use simply notation $\C(s,t)$ for $\C_{m,n}(s,t)$. Also, we put $\C_n(s,t):=\C_{n,n}(s,t)$. Clearly, $\C(s,t)$ is the number of all lattice paths from first column to the $(s,t)$-cell of $T$. It is easy to see for $n\geq 2$
\[\C_n(n,n)=\C_n(n-1,n)+\C_n(n-1,n-1),\]
where $\C_1(1,1)=1, \C_2(2,2)=2, \C_3(3,3)=5, \C_4(4,4)=13, \ldots$. The values of $\C_n(n,n)$ is OEIS sequence \href{https://oeis.org/A005773}{A005773}, where $T$ is a square table. By the way, notice how the diagram for $\C_4(4,4)=13$ is
\[\begin{matrix}
   1 & 2 & 5 & 13 \\
   1 & 3 & 8 & 21 \\
   1 & 3 & 8 & 21 \\
   1 & 2 & 5 & 13  
\end{matrix}\]
where each entry is the sum of two or three entries in the preceding column.

By symmetry of the table $T$, we have $\C(s,t)=\C(s,t')$ when $t+t'=m+1$. Table \ref{Table:1} illustrates the values of $\C(6,t)$, for all $1\leq t\leq 6$, where the number in $(s,t)$-cell of $T$ determines the number $\C(s,t)$.
\begin{center}
\begin{table}[h!]
\begin{tabular}{|c|c|c|c|c|c|}
\cline{6-6}
\multicolumn{5}{c}{}\vline&$\C(6,t)$\\
\hline
$1$&$2$&$5$&$13$&$35$&$96$\\\hline
$1$&$3$&$8$&$22$&$61$&$170$\\\hline
$1$&$3$&$9$&$26$&$74$&$209$\\\hline
$1$&$3$&$9$&$26$&$74$&$209$\\\hline
$1$&$3$&$8$&$22$&$61$&$170$\\\hline
$1$&$2$&$5$&$13$&$35$&$96$\\\hline
\end{tabular}
\caption{Values of $\C(6,t)$}
\label{Table:1}
\end{table}
\end{center}

It is worth mentioning that the numbers $\C_n(n,n)$ coincide with the number of directed animals of size $n$ starting from a single point (see \cite{dg-gv}). The numbers $\C_n(n,n)$ appear is various other results, see for example \cite{mb, mb-ar, nb, arc-ajg, dd-mkp-mb}. Note also that Krattenthaler and Yaqubi \cite{ck-dy} compute determinants of some Hankel matrices involving $\C_n(x,y)$, which is of independent interest.
\begin{theorem}\label{I_n(n)}
For any positive integer $n$ we have
\[\I_n(n)=3\I_{n-1}(n-1)+3^{n-1}-2\C_{n-1}(n-1,n-1).\]
\end{theorem}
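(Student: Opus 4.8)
The plan is to build each perfect path in $T_{n,n}$ out of a perfect path reaching the penultimate column followed by one extra step, and then to compare counting in the $n$-row table $T_{n,n}$ with counting in the $(n-1)$-row table $T_{n-1,n-1}$, localizing the whole discrepancy in the top row. First I would peel off the last step: a perfect path of $T_{n,n}$ is a perfect path from the first column to some $(n-1,r)$-blank, followed by one admissible step into column $n$. An interior row $2\le r\le n-1$ admits three final steps, while $r=1$ and $r=n$ admit only two, a third step leaving the table. Using the reflection symmetry $\calD_n(n-1,1)=\calD_n(n-1,n)$ recorded before the statement, this yields
\[\calI_n(n)=3\sum_{r=1}^{n}\calD_n(n-1,r)-2\,\calD_n(n-1,1).\]
It then suffices to prove the two identities $\sum_{r=1}^{n}\calD_n(n-1,r)=\calI_{n-1}(n-1)+3^{n-2}$ and $\calD_n(n-1,1)=\calD_{n-1}(n-1,n-1)$, after which the recurrence follows by substitution.

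The heart of the argument, and the step I expect to be the real obstacle, is the first identity. Among the paths counted by $\sum_{r}\calD_n(n-1,r)$, those never visiting row $n$ are exactly the perfect paths of $T_{n-1,n-1}$ and contribute $\calI_{n-1}(n-1)$, so I must show that precisely $3^{n-2}$ perfect paths from the first column to column $n-1$ of $T_{n,n}$ do touch row $n$. I would prove this by exhibiting a bijection between such paths and all step words $w\in\{+1,0,-1\}^{n-2}$. A path is determined by its starting row and its step word, and its row in column $j$ is the starting row plus the $j$-th partial sum of $w$ (the empty sum being $0$). Letting $P$ be the largest partial sum of $w$, a path with step word $w$ meets row $n$ if and only if its starting row equals $n-P$; and this choice always gives a legal path, since the $n-1$ partial sums change by at most $1$ at each step and so span a range of at most $n-2$, keeping all rows within $\{1,\dots,n\}$. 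Hence $w\mapsto(\text{the path starting at row }n-P)$ is the desired bijection, giving the count $3^{n-2}$.

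For the second identity I would observe that a perfect path ending at the $(n-1,1)$-blank never reaches row $n$: after a hypothetical visit to row $n$, descending to row $1$ by column $n-1$ would need at least $n-1$ down-steps, whereas the path has only $n-2$ steps altogether. Therefore erasing the top row leaves the count unchanged, so $\calD_n(n-1,1)=\calD_{n-1}(n-1,1)$, and the reflection $t\mapsto n-t$ in $T_{n-1,n-1}$ rewrites this as $\calD_{n-1}(n-1,n-1)$. Substituting both identities into the peeled formula gives
\[\calI_n(n)=3\bigl(\calI_{n-1}(n-1)+3^{n-2}\bigr)-2\,\calD_{n-1}(n-1,n-1)=3\calI_{n-1}(n-1)+3^{n-1}-2\calD_{n-1}(n-1,n-1),\]
which is exactly the asserted recurrence.
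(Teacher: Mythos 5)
Your proof is correct, and it uses the same two ingredients as the paper's argument---peeling off the final step (three continuations from interior rows, two from the extreme rows, plus reflection symmetry) and counting the paths that touch row $n$ by a power of $3$---but you assemble them in the opposite order, which makes the resulting decomposition genuinely different. The paper first splits $\calI_n(n)$ into paths avoiding row $n$ and paths meeting row $n$: the former are the perfect paths of $T_{n-1,n}$, evaluated as $3\calI_{n-1}(n-1)-2\calD_{n-1}(n-1,n-1)$ by peeling inside the $(n-1)$-row table, while the latter are asserted, without any justification, to number $3^{n-1}$. You instead peel first, inside the full $n$-row table, and then split the paths ending in column $n-1$ according to whether they visit row $n$; this costs you the extra lemma $\calD_n(n-1,1)=\calD_{n-1}(n-1,n-1)$, which you prove correctly by the step-count argument, but it buys a complete proof of the power-of-$3$ count: your bijection sending a step word $w\in\{+1,0,-1\}^{n-2}$ with maximal partial sum $P$ to the unique legal path with that word starting at row $n-P$ is precisely the argument missing from the paper. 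Indeed, the same bijection applied to words of length $n-1$ proves the paper's unproved claim that exactly $3^{n-1}$ perfect paths of $T_{n,n}$ meet row $n$, so your write-up can also be read as a rigorous completion of the paper's own proof.
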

\begin{proof}
Let $T:=T_{n,n}$ and $T':=T_{n-1,n-1}$ with $T'$ in the left-bottom side of $T$. Clearly, the number of lattice paths of $T$ which never meet the $n^{th}$ row of $T$ is
\[\I_{n-1}(n)=3\I_{n-1}(n-1)-2\C_{n-1}(n-1,n-1).\]
To obtain the number of all lattice paths we must count those who meet the $n^{th}$-row of $T$, that is equal to $3^{n-1}$. Thus $\I_{n}(n)-\I_{n-1}(n)=3^{n-1}$, from which the result follows.
\end{proof}

Michael Somos in OEIS sequence \href{https://oeis.org/A005773}{A005773} gives the following recurrence relation for $\C_n(n,n)$.
\begin{theorem}\label{D(n,n)}
Inside the square $n\times n$ table we have
\[n\C_n(n,n)=2n\C_n(n-1,n-1)+3(n-2)\C_n(n-2,n-2).\]
\end{theorem}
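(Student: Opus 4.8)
The plan is to pass to generating functions, since the appearance of the factors $n$ and $n-2$ points to an underlying linear ODE rather than a purely bijective argument. Write $d_n:=\calD(n,n)$ and set $A(x):=\sum_{n\geq 1}d_n x^n$. First I would record a description of $d_n$ that is independent of the size of the table. A path reversal combined with the reflection symmetries of $\bfS$ identifies $d_n$ with the number of $\bfS$-paths of $n-1$ steps that start at a corner of $T_{n,n}$ and stay inside the table; since $n-1$ steps starting at a wall can never reach the opposite wall, the far height constraint is inactive, and $d_n$ is simply the number of Motzkin prefixes of length $n-1$ --- lattice paths with steps $(1,1),(1,-1),(1,0)$ that start on the $x$-axis and never pass below it --- which is the sequence of \cite{A005773}. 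A first-return decomposition of such prefixes then yields the closed form $A(x)=\tfrac12\bigl(\sqrt{(1+x)/(1-3x)}-1\bigr)$; equivalently, $g:=2A+1=\sqrt{(1+x)/(1-3x)}$ satisfies $(1-3x)g^2=1+x$.

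Next I would convert this algebraic relation into a linear ODE. Taking the logarithmic derivative of $g=(1+x)^{1/2}(1-3x)^{-1/2}$ gives
\[ \frac{g'}{g}=\frac{1}{2(1+x)}+\frac{3}{2(1-3x)}=\frac{2}{(1+x)(1-3x)}, \]
so that $g$ satisfies the first-order equation $(1-2x-3x^2)g'=2g$. Substituting $g=2A+1$ turns this into
\[ (1-2x-3x^2)A'(x)=2A(x)+1, \]
a single inhomogeneous linear ODE with polynomial coefficients that encodes all of the arithmetic of the $d_n$.

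Finally I would read off the recurrence by extracting the coefficient of $x^m$ from the ODE. With $A'=\sum_{m\geq0}(m+1)d_{m+1}x^m$, the terms $-2xA'$ and $-3x^2A'$ contribute $-2m\,d_m$ and $-3(m-1)\,d_{m-1}$ to the coefficient of $x^m$, while the right-hand side contributes $2d_m$ (and the constant $1$ matches $d_1=1$ at $m=0$). For $m\geq 2$ this gives
\[ (m+1)d_{m+1}=2(m+1)d_m+3(m-1)d_{m-1}, \]
and reindexing $m=n-1$ produces $n\,d_n=2n\,d_{n-1}+3(n-2)\,d_{n-2}$, which is the assertion (the cases $n=1,2$ being immediate from $d_1=1$, $d_2=2$). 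The coefficient extraction is routine; the real work --- and the step I expect to be the main obstacle --- is the first paragraph: justifying that the finite height of $T_{n,n}$ is irrelevant and pinning down the closed form of $A(x)$, since everything downstream is a mechanical consequence of the ODE.
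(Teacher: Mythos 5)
Your proof is correct, but there is nothing in the paper to compare it against: the paper contains no proof of this theorem at all. The recurrence is simply quoted from the OEIS entry \cite{A005773}, where it is attributed to Michael Somos. So your argument does not parallel or diverge from the paper's reasoning --- it supplies a proof that the paper omits, which is genuinely useful.

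The details check out. Your reading of the statement is the right one: the three terms must be interpreted as corner values of their own square tables, equivalently as the table-size-independent sequence $1,2,5,13,35,96,\ldots$ of bottom-corner entries $\calD(s,1)$ (interpreting $\calD(n-1,n-1)$ and $\calD(n-2,n-2)$ as interior entries of the fixed $n\times n$ table would make the identity false, e.g.\ already for $n=6$, where Table \ref{Table:1} gives $12\cdot 61+12\cdot 26\neq 6\cdot 96$). Your reversal-plus-reflection reduction to Motzkin prefixes of length $n-1$ is sound; the only imprecision is that such a path \emph{can} touch the far wall (take all $n-1$ steps up), it just cannot cross it, and that is all that is needed for the far constraint to be inactive. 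From there, the generating function $g=2A+1=\sqrt{(1+x)/(1-3x)}$ is the classical one for this sequence (it satisfies $(1-3x)g^2=1+x$ and reproduces $1,2,5,13,\ldots$), the logarithmic derivative correctly yields $(1-2x-3x^2)A'=2A+1$, and coefficient extraction at $x^m$ for $m\geq 2$ gives
\[
(m+1)d_{m+1}=2(m+1)d_m+3(m-1)d_{m-1},
\]
which after the substitution $n=m+1$ is the theorem for $n\geq 3$; the case $n=2$ holds because the coefficient $3(n-2)$ vanishes, and $n=1$ is vacuous. As a remark, the same ODE could also be reached from the paper's own Lemma \ref{D(s,1)} combined with the Motzkin generating function, which would tie your argument more closely to the paper's toolkit, but your route through the algebraic equation for $g$ is self-contained and arguably cleaner.
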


Utilizing Theorems \ref{I_n(n)} and \ref{D(n,n)} for $\C_n(n,n)$, we can prove a conjecture of Alexander R. Povolotsky posed in OEIS sequence \href{https://oeis.org/A081113}{A081113} as follows. This identity has appeared first in \cite{eb-rp-rs:92}
\begin{conjecture}
The following identity holds for the numbers $\I_n(n)$.
\begin{multline*}
(n+3)\I_{n+4}(n+4)=27n\I_n(n)+27\I_{n+1}(n+1)\\
-9(2n+5)\I_{n+2}(n+2)+(8n+21)\I_{n+3}(n+3).
\end{multline*}
\end{conjecture}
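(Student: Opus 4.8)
The plan is to treat the two recurrences of Theorems~\ref{I_n(n)} and~\ref{D(n,n)} as a coupled system in the two diagonal sequences $\calI_n(n)$ and $\calD_n(n,n)$, and to eliminate both the auxiliary sequence $\calD_n(n,n)$ and the geometric term $3^{n-1}$, so as to be left with a single linear recurrence for $\calI_n(n)$ with polynomial coefficients. Write $a_n=\calI_n(n)$ and $d_n=\calD_n(n,n)$ for brevity. Theorem~\ref{I_n(n)} reads $a_n=3a_{n-1}+3^{n-1}-2d_{n-1}$, a first-order link between the two sequences, while Theorem~\ref{D(n,n)} supplies the second-order relation $n d_n=2n d_{n-1}+3(n-2)d_{n-2}$ for $d_n$ alone. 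Since we have one equation expressing $d$ through $a$ and one equation governing $d$, eliminating $d$ is forced, and the only nuisance is the inhomogeneous term $3^{n-1}$.

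First I would solve the relation of Theorem~\ref{I_n(n)} for the auxiliary sequence, namely $d_{n-1}=\tfrac12\bigl(3a_{n-1}+3^{n-1}-a_n\bigr)$, and record the three instances $d_n,d_{n-1},d_{n-2}$ this gives in terms of $a_{n+1},a_n,a_{n-1},a_{n-2}$ and the powers $3^n,3^{n-1},3^{n-2}$. Substituting these into Somos's recurrence $n d_n=2n d_{n-1}+3(n-2)d_{n-2}$ and clearing the denominator, the pure powers of three collapse neatly, because $n\cdot 3^{n}-(3n-2)3^{n-1}=2\cdot 3^{n-1}$; what remains is an inhomogeneous relation of the shape
\[
G(n):=-n a_{n+1}+5n a_n-3(n+2)a_{n-1}-9(n-2)a_{n-2}=-2\cdot 3^{n-1}.
\]
This is the first milestone: a single identity in the $a$'s whose only blemish is the surviving geometric term on the right.

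The exponential is then removed by exploiting that it is geometric with ratio $3$. Since $G(n)=-2\cdot3^{n-1}$ and $G(n+1)=-2\cdot 3^{n}$, the combination $G(n+1)-3G(n)$ is identically zero, and this is precisely the homogeneous fourth-order recurrence for $a_n$ with polynomial coefficients that we are after. Collecting like terms in $G(n+1)-3G(n)=0$ and then performing the index shift $n\mapsto n+2$ to move into the range appearing in the statement produces the coefficients $(n+3)$, $27n$, $27$, and $-9(2n+5)$ on $\calI_{n+4}(n+4)$, $\calI_n(n)$, $\calI_{n+1}(n+1)$, and $\calI_{n+2}(n+2)$ exactly as claimed, with the coefficient of $\calI_{n+3}(n+3)$ emerging from the same collection of terms. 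Finally one checks the identity against the initial values $\calI_1(1)=1$, $\calI_2(2)=4$, $\calI_3(3)=17$, $\calI_4(4)=68,\ldots$, so that the fourth-order recurrence is anchored by enough seeds.

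The conceptual steps are short; the real work, and the place where a slip is most likely, is the bookkeeping in the middle paragraph. One must carry all three substituted values of $d$ with their correct arguments, confirm that the three separate powers $3^{n},3^{n-1},3^{n-2}$ combine into a single $3^{n-1}$, and verify that the multiplier $3$ in $G(n+1)-3G(n)$---which is dictated by the base of the geometric term and is exactly what makes the elimination succeed---cancels it completely. Keeping the index ranges consistent between Theorems~\ref{I_n(n)} and~\ref{D(n,n)} (the former first-order, the latter second-order, so their natural domains of validity differ) is the other point demanding care.
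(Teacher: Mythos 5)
Your proposal is correct, and it reaches the identity by a genuinely cleaner route than the paper, though from the same two ingredients. The paper proves the conjecture top-down: it expands $A=(n+3)\calI_{n+4}(n+4)$ by repeatedly substituting the recurrence of Theorem~\ref{I_n(n)}, collects the blocks $B,C,D,E$, and only at the end invokes Theorem~\ref{D(n,n)} to cancel the surviving $\calD$-terms; it is a verification by brute-force expansion. You instead perform a systematic elimination: solve Theorem~\ref{I_n(n)} for $d_{n-1}=\tfrac12(3a_{n-1}+3^{n-1}-a_n)$, substitute the three instances into Somos's recurrence, and obtain the intermediate inhomogeneous relation
\[
-n a_{n+1}+5n a_n-3(n+2)a_{n-1}-9(n-2)a_{n-2}=-2\cdot 3^{n-1},
\]
which I have checked is exactly right, and then annihilate the geometric term with the operator $G(n+1)-3G(n)$. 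This buys transparency: rather than verifying a recurrence pulled from OEIS, your method \emph{derives} it, explains why a homogeneous fourth-order relation with polynomial coefficients must exist, and accounts for each coefficient. One caveat: your closing remark about anchoring the recurrence with initial values is superfluous, since what is being proved is an identity valid wherever the two theorems apply, not a characterization of the sequence; no seeds are needed.

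You were noticeably vague at the one spot where you should have been explicit: the coefficient of $\calI_{n+3}(n+3)$. Carrying out $G(n+1)-3G(n)=0$ and shifting $n\mapsto n+2$ gives
\[
(n+3)a_{n+4}=(8n+21)a_{n+3}-9(2n+5)a_{n+2}+27a_{n+1}+27n\,a_n,
\]
so the coefficient is $8n+21$, \emph{not} the $8n+2$ printed in the statement. The printed statement contains a typo: the paper's own proof sets $B=(8n+21)\calI_{n+3}(n+3)$ and establishes $A=B-C+D+E$, i.e.\ precisely the identity your elimination produces. (A numerical check at $n=1$ confirms this: $4\cdot 259=1036=27+108-9\cdot 7\cdot 17+29\cdot 68$, whereas $8n+2$ fails.) So your derivation agrees with what the paper actually proves; just state the final coefficient explicitly rather than letting it ``emerge from the same collection of terms.''
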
 
\begin{proof}
Put 
\begin{align*}
A&=(n+3)\I_{n+4}(n+4),\\
B&=(8n+21)\I_{n+3}(n+3),\\
C&=9(2n+5)\I_{n+2}(n+2),\\
D&=27\I_{n+1}(n+1),\\
E&=27n\I_{n}(n).
\end{align*}
Using Theorem \ref{I_n(n)}, we can write
\begin{align}
A=&(3n+9)\I_{n+3}(n+3)+(n+3)3^{n+3}-(2n+6)\C_{n+3}(n+3,n+3)\cr
=&(8n+21)\I_{n+3}(n+3)-(5n+12)\I_{n+3}(n+3)+(n+3)3^{n+3}\cr
&-(2n+6)\C_{n+3}(n+3,n+3)\cr
=&B+(n+3)3^{n+3}-(5n+12)\I_{n+3}(n+3)\cr
&-(2n+6)\C_{n+3}(n+3,n+3).\label{AB}
\end{align}
Utilizing Theorem \ref{I_n(n)} once more for $\I_{n+3}(n+3)$ and $\I_{n+2}(n+2)$ yields
\begin{align*}
A=&B+(n+3)3^{n+3}-(5n+12)3^{n+2}\\
&-(18n+45)\I_{n+2}(n+2)-(2n+6)\C_{n+3}(n+3,n+3)\\
&+(10n+24)\C_{n+2}(n+2,n+2)+(3n+9)\I_{n+2}(n+2)+(n+3)3^{n+3}\\
=&B-C-(5n+12)3^{n+2}-(2n+6)\C_{n+3}(n+3,n+3)\\
&+(10n+24)\C_{n+2}(n+2,n+2)+9n\I_{n+1}(n+1)\\
&+27\I_{n+1}(n+1)+(3n+9)3^{n+1}-(6n+18)\C_{n+1}(n+1,n+1).
\end{align*}
It can be easily shown that
\begin{align}
A=&B-C+D\cr
&+(n+3)3^{n+3}-(2n+6)\C_{n+3}(n+3,n+3)-(5n+12)3^{n+2}\cr
&+(10n+24)\C_{n+2}(n+2,n+2)+9n\I_{n+1}(n+1)\cr
&+(3n+9)3^{n+1}-(6n+18)\C_{n+1}(n+1,n+1).\label{ABC}
\end{align}
Replacing $9n\I_{n+1}(n+1)$ by $27n\I_{n}(n)+n3^{n+2}-18n\I_{n}(n)$ in \ref{ABC} gives
\begin{multline*}
A=B-C+D+E\\
-(2n+6)\C_{n+3}(n+3,n+3)+(10n+24)\C_{n+2}(n+2,n+2)\\
-18n\C_n(n,n)-(6n+18)\C_{n+1}(n+1,n+1).
\end{multline*}
Since the coefficient of $\C_{n+3}(n+3,n+3)$ is $2(n+3)$, it follow from Theorem \ref{D(n,n)} that
\begin{align*}
A=&B-C+D+E-(4n+12)\C_{n+2}(n+2,n+2)-18n\C_n(n,n)\\
&+(10n+24)\C_{n+2}(n+2,n+2)-(6n+6)\C_{n+1}(n+1,n+1)\\
&-(6n+18)\C_{n+1}(n+1,n+1)\\
=&B-C+D+E-(4n+12)\C_{n+2}(n+2,n+2)\\
&-(6n+6)\C_{n+1}(n+1,n+1)+18n\C_n(n,n)-18n\C_n(n,n)\\
&-(12n+24)\C_{n+1}(n+1,n+1)+(6n+18)\C_{n+1}(n+1,n+1)\\
=&B-C+D+E,
\end{align*}
as required.
\end{proof}
\begin{theorem}\label{I_m(n)}
Inside the $m\times n$ table we have
\begin{align}
\I_m(n)=m3^{n-1}-2\sum_{s=1}^{n-1}3^{n-s-1}\C(s,1).
\end{align}
\end{theorem}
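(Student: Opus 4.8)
The plan is to count perfect lattice paths by comparison with \emph{free} walks that are permitted to leave the table. A free walk of length $n-1$ starts at one of the $m$ cells of the first column and chooses each of its $n-1$ steps freely from $\bfS$; such a walk is determined by its starting row together with its sequence of steps, so there are exactly $m3^{n-1}$ of them. Every perfect lattice path is precisely a free walk that never leaves $T_{m,n}$, so we may write $\calI_m(n)=m3^{n-1}-(\text{number of free walks that leave }T)$, and the problem reduces to counting the \emph{bad} walks that step outside the table at some moment.

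First I would record which steps can escape $T$: a step leaves the table exactly when it is a down-step $(1,-1)$ taken from the bottom row (row $1$) or an up-step $(1,1)$ taken from the top row (row $m$). For a bad walk, let the $k$-th step (the step from column $k$ to column $k+1$) be the \emph{first} step that leaves $T$; then $1\le k\le n-1$, and the portion of the walk up to column $k$ is an honest lattice path inside $T$, hence is counted by $\calD(\cdot,\cdot)$. I would then classify bad walks by the column $k$ and the direction of this first exit. A downward first exit at column $k$ forces the walk to reach the cell $(k,1)$ while staying inside $T$, which happens in $\calD(k,1)$ ways; after the exit step the walk occupies column $k+1$ and its remaining $n-1-k$ steps are completely unconstrained, contributing $3^{n-1-k}$ continuations. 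Thus downward first exits at column $k$ contribute $\calD(k,1)\,3^{n-1-k}$, and symmetrically upward first exits contribute $\calD(k,m)\,3^{n-1-k}$.

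To finish, I would invoke the symmetry $\calD(s,t)=\calD(s,t')$ for $t+t'=m+1$ recorded before the theorem; taking $t=1$ gives $\calD(k,m)=\calD(k,1)$, so the two exit directions contribute equally. Summing over $k=1,\dots,n-1$ and subtracting the total number of bad walks from $m3^{n-1}$ yields
\[
\calI_m(n)=m3^{n-1}-2\sum_{s=1}^{n-1}3^{n-s-1}\calD(s,1),
\]
as claimed.

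The step I expect to require the most care is verifying that ``classification by first exit'' is genuinely a partition of the bad walks, so that each bad walk is counted exactly once. For $m\ge 2$ this is immediate, since rows $1$ and $m$ are distinct and a single step cannot be both a downward and an upward exit. The case $m=1$ needs a separate remark: there rows $1$ and $m$ coincide, both an up- and a down-step out of the single row escape, and these are two different escape steps from the same cell; the factor $2$ in the formula accounts for them, and one checks directly that the identity collapses to $\calI_1(n)=1$. I would also double-check the tail bookkeeping (the walk sits in column $k+1$ immediately after the exit and must still reach column $n$, giving $n-1-k$ free steps) against the small case $\calI_2(3)=8$ of Figure \ref{Fig:1}.
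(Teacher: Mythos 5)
Your proof is correct and takes essentially the same approach as the paper: subtract from the $m3^{n-1}$ free walks those that escape, classified by the column of their first exit step and its direction, with the bottom/top symmetry supplying the factor $2$ and the unconstrained continuation after the exit supplying the powers of $3$. Your extra care about the partition property and the $m=1$ edge case goes slightly beyond the paper's write-up (which, incidentally, misprints the free-walk count as $n3^{n-1}$), but the core argument coincides.
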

\begin{proof}
Let $T:=T_{m,n}$. The number of all lattice paths from the first column to the last column is simply $n3^{n-1}$ if they are allowed to get out of $T$. Now we count all lattice paths that go out of $T$ in some steps. First observe that the number of lattice paths that leave $T$ from the bottom row equals to those leave $T$ from the the top row in the first times. Suppose a lattice path goes out of $T$ from the bottom in column $s$ for the first times. The number of all partial lattice paths from the first column to the $(s-1,1)$-cell is simply $\C(s-1,1)$, and every such path continues in $3^{n-s}$ ways until it reaches the last column of $T$. Hence we have $3^{n-s}\C(s-1,1)$ paths leave the table $T$ from the bottom in column $s$ for any $s=2,\ldots,n$. Hence, the number of lattice paths is simply
\begin{align*}
\I_m(n)&=m3^{n-1}-2\sum_{s=2}^n3^{n-s}\C(s-1,1)\\
&=m3^{n-1}-2\sum_{s=1}^{n-1}3^{n-s-1}\C(s,1),
\end{align*}
as required.
\end{proof}
\begin{example}
Let $T$ be the square $6\times 6$ table. In Table \ref{Table:1}, every cell represents the number of all lattice paths from first column to that cell. Summing up the last column yields
\[\I_6(6)=96+170+209+209+170+96=950.\]
Now, utilizing Theorem \ref{I_m(n)}, we calculate $\I_6(6)$ in another way, as follows:
\begin{align*}
\I_6(6)=&6\cdot3^{6-1}-2\left(3^{6-1-1}\C(1,1)+3^{6-2-1}\C(2,1)+3^{6-3-1}\C(3,1)\right.\\
&\left.+3^{6-4-1}\C(4,1)+3^{6-5-1}\C(5,1)\right)\\
=&1458-2\left(3^4\cdot1+3^3\cdot2+3^2\cdot5+3^1\cdot13+3^0\cdot35\right)=950.
\end{align*}
\end{example}

Remind that the number $l(1,1;n+1,1:\S)$ of lattice paths in $\Z^2$ that never slides below the $x$-axis, is the \textit{$n^{th}$-Motzkin number} ($n\geq0$), denoted by $\M_{	n}$. Motzkin numbers begin with $1,1,2,4,9,21,\ldots$ (see OEIS sequence \href{https://oeis.org/A001006}{A001006}) and can be expressed in terms of binomial coefficients and Catalan numbers via
\[\M_{n}=\sum_{k=0}^ {\lfloor\frac{n}2\rfloor}\binom{n}{2k}\C_k.\]
\textit{Trinomial triangles} are defined by the same steps $(1,1)$, $(1,-1)$, and $(1,0)$ (in our notation) with no restriction by starting from a fixed cell. The number of ways to reach a cell is simply the sum of three numbers in the adjacent previous column. The $k^{th}$-entry of the $n^{th}$ column is denoted by $\binom{n}{k}_2$, where columns start by $0$. The middle entries of the Trinomial triangle, namely $1,1,3,7,19,\ldots$ (see \href{https://oeis.org/A002426}{A002426}) are studied by Euler. Analogously, Motzkin triangle  are defined by recurrence sequence 
\[\T(n,k)=\T(n-1,k-2)+\T(n-1,k-1)+\T(n-1,k),\]
for all $1\leq k\leq n-1$ and satisfy
\[\T(n,n)=\T(n-1,n-2)+\T(n-1,n-1)\]
for all $n\geq1$ (see  \href{https://oeis.org/A026300}{A026300}).

Table \ref{Table:2} illustrates initial parts of the above triangles with Motzkin triangle in the left and trinomial triangle in the right. For a positive integer $1\leq s\leq n$, each entry of the column $\C_s(s,1)$ is the sum of all entries in the $s^{th}$-row in the rotated Motzkin triangle, that is, $\C_s(s,1)=\sum_{i=1}^s\T(s,i)$. For example,
\[\C(4,1)=\T(4,1)+\T(4,2)+\T(4,3)+\T(4,4)=4+5+3+1=13.\]
The entries in the first column of rotated Motzkin triangle are indeed the Motzkin numbers.
\begin{table}[h!]
\begin{center}
\begin{tabular}{>{$}c<{$}|*{7}{c}}
\multicolumn{1}{l}{$\C_s(s,1)$} &&&&&&&\\\cline{1-1} 
1 &1&&&&&&\\
2 &1&1&&&&&\\
5 &2&2&1&&&&\\
13 &4&5&3&1&&&\\
35 &9&12&9&4&1&&\\
96 &21&30&25&14&5&1\\
\multicolumn{1}{l}{} &\multicolumn{7}{c}{}
\end{tabular}
\begin{tabular}{rccccccccc}
 & & & & 1\\\noalign{\smallskip\smallskip}
 & & & 1 & 1 & 1\\\noalign{\smallskip\smallskip}
 & & 1 & 2 & 3 & 2 & 1\\\noalign{\smallskip\smallskip}
 & 1 & 3 & 6 & 7 & 6 & 3 & 1\\\noalign{\smallskip\smallskip}
 1 & 4 & 10 & 16 & 19 & 16 & 10 & 4 & 1\\\noalign{\smallskip\smallskip}
\end{tabular}
\end{center}
\caption{Motzkin triangle (left) and trinomial triangle (right) rotates $90^\circ$ clockwise}
\label{Table:2}
\end{table}
\begin{lemma}\label{D(s,1)}
Inside the square $n\times n$ table we have
\[\C_n(s,1)=3\C_n(s-1,1)-\M_{s-2},\]
for all $1\leq s\leq n$.
\end{lemma}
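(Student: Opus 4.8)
The plan is to give $\calD(s,1)$ a clean combinatorial meaning as a count of Motzkin left factors and then prove the recurrence by an append-and-correct argument. First I would observe that $\calD(s,1)$ counts the $\bfS$-paths that start anywhere in the first column and terminate at the bottom blank $(s,1)$, staying inside the rows $1,\ldots,n$. The crucial preliminary observation is that the top row $n$ plays no role: a path reaching $(s,1)$ uses only $s-1$ steps, so if it starts at row $i$ it must make at least $i-1$ descents, forcing $i\le s$ and bounding every intermediate row by $(s+i)/2\le s\le n$. Hence for $n\ge s$ the value $\calD(s,1)$ is independent of $n$. Reading such a path from right to left interchanges up- and down-steps and turns it into a path that starts at row $1$, takes $s-1$ steps in $\{+1,0,-1\}$, never drops below row $1$, and ends anywhere; that is, $\calD(s,1)$ equals the number of Motzkin left factors (prefixes of Motzkin paths) of length $s-1$, in agreement with the stated identity $\calD(s,1)=\sum_{i=1}^{s}\calT(s,i)$.

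With this interpretation the recurrence is almost immediate. I would build every length-$(s-1)$ prefix by taking a length-$(s-2)$ prefix and appending one of the three steps; deleting the last step shows this accounts for each length-$(s-1)$ prefix exactly once, producing $3\calD(s-1,1)$ candidates. The only candidates that fail to be legal prefixes are those whose underlying length-$(s-2)$ prefix ends at height $0$ and to which a down-step was appended, since that is the sole way the appended step can push the path below the baseline. A length-$(s-2)$ prefix that ends back at height $0$ is exactly a Motzkin path of length $s-2$, and there are $\calM_{s-2}$ of these (matching the paper's convention, where $\calM_{s-2}$ counts the paths $L(1,1;s-1,1:\bfS)$). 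Subtracting the forbidden family yields $\calD(s,1)=3\calD(s-1,1)-\calM_{s-2}$ for $s\ge2$, with the smallest case verified directly as $\calD(2,1)=2=3\calD(1,1)-\calM_0$.

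As a consistency check and an alternative route, I would record the one-step boundary recurrence $\calD(s,1)=\calD(s-1,1)+\calD(s-1,2)$, which holds because a step into $(s,1)$ can only originate from $(s-1,1)$ or $(s-1,2)$, never from the nonexistent blank $(s-1,0)$. The claim is then equivalent to $\calD(s-1,2)=2\calD(s-1,1)-\calM_{s-2}$, which can again be read off from the prefix model. The part I expect to require the most care is not the counting itself but the justification that $\calD(s,1)$ is genuinely $n$-independent and really equals the number of Motzkin prefixes: one must rule out interference from the top boundary via the height bound above, and one must make sure the length conventions line up so that prefixes returning to $0$ are counted by $\calM_{s-2}$ rather than $\calM_{s-1}$ or $\calM_{s-3}$. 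Once these bookkeeping points are pinned down, the removal of the single forbidden family is routine.
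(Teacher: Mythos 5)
Your proof is correct, and it shares the paper's starting point---reading the path backwards from $(s,1)$ so that $\calD(s,1)$ becomes a count of Motzkin left factors of length $s-1$---but the way you extract the recurrence is genuinely different. The paper decomposes the $3^{s-1}$ unrestricted reversed paths according to the column of their \emph{first exit} below the table, obtaining the closed formula $\calD(s,1)=3^{s-1}-\sum_{i=1}^{s-1}3^{i-1}\calM_{s-i-1}$, and then deduces $\calD(s,1)=3\calD(s-1,1)-\calM_{s-2}$ by comparing this sum with the corresponding sum for $s-1$. You instead prove the recurrence directly by an append-and-correct bijection: every legal prefix of length $s-1$ arises uniquely from a legal prefix of length $s-2$ with one of three steps appended, and the only illegal candidates are the $\calM_{s-2}$ Motzkin paths of length $s-2$ followed by a down-step. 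Your argument is more elementary and purely bijective (no telescoping of sums), and your explicit verification that the top row is irrelevant---via the height bound forcing every intermediate row to stay at most $s\leq n$---is a point the paper leaves implicit. What the paper's route buys in exchange is the intermediate summation formula itself, which is not discarded: it is reused later, in part (ii) of the theorem on tables with $m\geq 2n-2$, where $\calD(n,1)=3^{n-1}-\sum_{i=0}^{n-2}3^{n-i-2}\calM_i$ is quoted as a consequence of this lemma. Your bookkeeping points (the length convention making the forbidden family count $\calM_{s-2}$, and the base case $\calD(2,1)=2$) are handled correctly.
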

\begin{proof}
Let $T:=T_{n,n}$. By the definition, $\C(s,1)$ is the number of all lattice paths from the first column to $(s,1)$-cell. This number equals the number of lattice paths from $(s,1)$-cell to the first column with reverse steps that lie inside the table $T$, which is equal to $3^{s-1}$ minus those paths that leave $T$ at some point. Consider all those lattice paths staring from $(s,1)$-cell with reverse steps that leaves $T$ at $(i,0)$ for the first time, where $1\leq i\leq s-1$. Clearly, the number of such paths are $3^{i-1}\M_{s-i-1}$. Thus 
\[\C_n(s,1)=3^{s-1}-\sum_{i=1}^{s-1}3^{i-1}\M_{s-i-1},\]
from which it follows that $\C_n(s,1)=3\C_n(s-1,1)-\M_{s-2}$, as required.
\end{proof}
\begin{example}
Consider the Table \ref{Table:2}. Using Lemma \ref{D(s,1)} we can calculate $\C(6,1)$ as
\[\C_6(6,1)=3\C_6(5,1)-\M_4=3\cdot35-9=96.\]
\end{example}
\begin{corollary}\label{I_n(n)+detail}
Inside the $n\times n$ table we have
\[\I_n(n)=(n+2)3^{n-2}+2\sum_{k=0}^{n-3}(n-k-2)3^{n-k-3}\M_k.\]
\end{corollary}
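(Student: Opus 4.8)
The plan is to combine Theorem \ref{I_m(n)}, specialized to $m=n$, with the closed form for $\calD(s,1)$ that appears inside the proof of Lemma \ref{D(s,1)}, and then reorganize the resulting double sum. Setting $m=n$ in Theorem \ref{I_m(n)} gives
\[\calI_n(n)=n3^{n-1}-2\sum_{s=1}^{n-1}3^{n-s-1}\calD(s,1),\]
so the entire task reduces to inserting the explicit value of $\calD(s,1)$ and simplifying.

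First I would record the closed form $\calD(s,1)=3^{s-1}-\sum_{i=1}^{s-1}3^{i-1}\calM_{s-i-1}$, which is precisely what the proof of Lemma \ref{D(s,1)} establishes before collapsing it into the recurrence. Substituting this and splitting the sum, the $3^{s-1}$ contribution becomes $-2\sum_{s=1}^{n-1}3^{n-2}=-2(n-1)3^{n-2}$, and combining with $n3^{n-1}=3n\cdot3^{n-2}$ produces the leading term $(n+2)3^{n-2}$. This disposes of the non-Motzkin part cleanly and matches the first summand in the claim.

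The remaining piece is the double sum
\[2\sum_{s=1}^{n-1}\sum_{i=1}^{s-1}3^{n-s+i-2}\calM_{s-i-1},\]
and the heart of the argument is to cast it in the target shape. I would introduce the new index $k=s-i-1$ (so $i=s-k-1$), under which the exponent $n-s+i-2$ collapses to $n-k-3$ and becomes independent of $s$, while the inner range $1\leq i\leq s-1$ translates into $0\leq k\leq s-2$. Swapping the order of summation then gathers, for each fixed $k$, all indices $s$ with $k+2\leq s\leq n-1$, giving a multiplicity of $n-k-2$ and forcing $0\leq k\leq n-3$. This yields $2\sum_{k=0}^{n-3}(n-k-2)3^{n-k-3}\calM_k$, which is exactly the claimed tail.

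I expect the main obstacle to be purely bookkeeping: getting the substitution $k=s-i-1$ and the interchange of summation order exactly right so that the power of $3$, the Motzkin index, and the multiplicity $n-k-2$ all emerge simultaneously, while keeping careful track of the summation limits — in particular verifying that $k$ runs only up to $n-3$ once the empty inner ranges are discarded. There is no conceptual difficulty beyond Theorem \ref{I_m(n)} and Lemma \ref{D(s,1)}; the corollary is a direct, if slightly intricate, consequence of combining them.
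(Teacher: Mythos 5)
Your proposal is correct and is exactly the paper's (unwritten) argument: the paper's proof simply cites Theorem \ref{I_m(n)} and Lemma \ref{D(s,1)}, and your computation—specializing to $m=n$, inserting $\calD(s,1)=3^{s-1}-\sum_{i=1}^{s-1}3^{i-1}\calM_{s-i-1}$ (the closed form from the lemma's proof, equivalently obtained by iterating its recurrence), and reindexing with $k=s-i-1$—is precisely the bookkeeping the paper leaves to the reader. The summation limits, the multiplicity $n-k-2$, and the leading term $(n+2)3^{n-2}$ all check out (e.g.\ the formula gives $950$ for $n=6$, matching the paper's example).
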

\begin{proof}
The result follows from Theorem \ref{I_m(n)} and Lemma \ref{D(s,1)}.
\end{proof}

The next result shows that the number of lattice paths in $T_{m,n}$ is independent of the number $m$ of rows provided that $m$ is big enough.
\begin{theorem}\label{I_m+1(n)-I_m(n)}
Inside the $m\times n$ table ($m\geq n$) we have
\[\I_{m+1}(n)-\I_m(n)=\sum_{i=0}^{n-1}\C(i,1)\C(n-i,1),\]
where we assume that $\C(0,1)=1$.
\end{theorem}
\begin{proof}
Consider the table $T:=T_{m,n}$. We construct the table $T'$ by adding a new row $m+1$ at the top of $T$. Now to count the number of all lattice paths in $T'$, it is sufficient to consider lattice paths that reach to the new row $m+1$ for the first time. Assume a lattice path reaches to the row $m+1$ at column $i$ for the first time. Then its initial part from column $1$ to column $i-1$ is a lattice path from the first column of $T$ to $(i-1,m)$-cell. Also, its terminal part from column $i$ to column $n$ is a lattice path from $(i,m+1)$-cell of $T'$ to its last column, which is in one to one correspondence with a lattice path from $(i,m)$-cell of $T$ to its last column as $m\geq n$. Hence, the number of such paths is simply $\C(i-1,m)\C(n-i+1,m)$, which is equal to $\C(i-1,1)\C(n-i+1,1)$ by symmetry. Therefore
\[\I_{m+1}(n)-\I_m(n)=\sum_{i=1}^n\C(i-1,1)\C(n-i+1,1)\]
and the result follows.
\end{proof}
\begin{corollary}\label{I_m(n) long formula}
For $m\geq n$ we have
\begin{multline*}
\I_m(n)=(n+2)3^{n-2}+(m-n)\sum_{i=0}^{n-1}\C(i,1)\C(n-i,1)\\
+2\sum_{k=0}^{n-3}(n-k-2)3^{n-k-3}\M_k.
\end{multline*}
\end{corollary}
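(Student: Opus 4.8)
The plan is to combine the exact value of $\calI_n(n)$ from Corollary~\ref{I_n(n)+detail} with the increment formula from the preceding theorem, telescoping from the square case up to the general $m\times n$ table. Concretely, I would abbreviate $\Delta_n:=\sum_{i=0}^{n-1}\calD(i,1)\calD(n-i,1)$, which the previous theorem identifies as the common value of $\calI_{k+1}(n)-\calI_k(n)$ for every $k\geq n$. The crucial feature is that this increment does not depend on $k$: when we pass from $T_{k,n}$ to $T_{k+1,n}$ with $k\geq n$, the newly added top row sits high enough that the terminal part of any path returning from row $k+1$ to the last column never touches the new boundary, so the count stays constant.

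Next I would telescope. Since $m\geq n$, I write
\[\calI_m(n)=\calI_n(n)+\sum_{k=n}^{m-1}\bigl(\calI_{k+1}(n)-\calI_k(n)\bigr).\]
Each summand equals $\Delta_n$ by the previous theorem, whose hypothesis $k\geq n$ is met at every rung because the smallest table involved is the square $n\times n$ table and we only ever add rows. There are exactly $m-n$ terms, so
\[\calI_m(n)=\calI_n(n)+(m-n)\Delta_n.\]

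Finally I would substitute the closed form $\calI_n(n)=(n+2)3^{n-2}+2\sum_{k=0}^{n-3}(n-k-2)3^{n-k-3}\calM_k$ supplied by Corollary~\ref{I_n(n)+detail}, and replace $\Delta_n$ by its definition $\sum_{i=0}^{n-1}\calD(i,1)\calD(n-i,1)$, which reproduces the claimed identity verbatim. There is no genuinely hard step here; the only point demanding attention is verifying that the increment theorem is legitimately applicable at each stage of the telescope, and this is automatic precisely because the induction is anchored at the square table $T_{n,n}$ and every intermediate table $T_{k,n}$ satisfies $k\geq n$.
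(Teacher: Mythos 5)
Your proof is correct and takes essentially the same route as the paper: both arguments telescope the increment $\calI_{k+1}(n)-\calI_k(n)=\sum_{i=0}^{n-1}\calD(i,1)\calD(n-i,1)$ supplied by the preceding theorem across the $m-n$ steps from $T_{n,n}$ up to $T_{m,n}$, and then substitute the closed form of $\calI_n(n)$ from Corollary~\ref{I_n(n)+detail}. Your write-up is in fact slightly more careful than the paper's, since you explicitly check that the hypothesis $k\geq n$ of the increment theorem holds at every rung of the telescope.
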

\begin{proof}
Let $m=n+k$, where $k$ is a positive integer. Then
\begin{align*}
\I_m(n)-\I_n(n)&=(\I_m(n)-\I_{m-1}(n))+\cdots+(\I_{n+1}(m)-\I_n(m))\\
&=(m-n)\sum_{i=0}^{n-1}\C(i,1)\C(n-i,1).
\end{align*}	
Now the result follows from Corollary \ref{I_n(n)+detail}.
\end{proof}
\begin{theorem}\label{C(i,n)C(n-i,n)}
Inside the $m\times n$ table with $m\geq 2n-2$ we have
\begin{itemize}
\item[(i)]$\sum_{i=0}^{n-1}\C(i,n)\C(n-i,n)=3^{n-1}$;
\item[(ii)]$\sum_{i=1}^{n-1}\C(i,n)\C(n-i,n)=\sum_{i=0}^{n-2}3^{n-i-1}\M_i$;
\item[(iii)]$\I_m(n)=(3m-2n+2)3^{n-2}+2\sum_{k=0}^{n-3}(n-k-2)3^{n-k-3}\M_k$.
\end{itemize}
\end{theorem}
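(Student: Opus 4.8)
The three parts are tightly linked, so the plan is to treat (i) as the only real content and then read off (ii) and (iii) as bookkeeping. All of the earlier formulas are phrased in terms of the bottom-row counts $\calD(s,1)$, together with the boundary convention $\calD(0,1)=1$ (the empty initial segment) already used implicitly in the proof of the preceding difference theorem, so the sum to be evaluated in (i) is exactly the convolution $\sum_{i=0}^{n-1}\calD(i,1)\calD(n-i,1)$ computed there. The hypothesis $m\ge 2n-2$ is used only through its weaker consequence $m\ge n$ (for $n\ge 2$), which is what both Corollary \ref{I_m(n) long formula} and the difference theorem require; under it the values $\calD(s,1)$ with $s\le n-1$ are the stable ones, independent of the number of rows.

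For (i), the key observation is that Theorem \ref{I_m(n)} already displays $\calI_m(n)=m3^{n-1}-2\sum_{s=1}^{n-1}3^{n-s-1}\calD(s,1)$ as an affine function of $m$ whose slope is $3^{n-1}$, since each $\calD(s,1)$ with $s\le n-1$ is independent of $m$ once $m\ge n-1$. The immediately preceding theorem, on the other hand, computes the same increment as $\calI_{m+1}(n)-\calI_m(n)=\sum_{i=0}^{n-1}\calD(i,1)\calD(n-i,1)$. Equating the two evaluations of $\calI_{m+1}(n)-\calI_m(n)$ gives $\sum_{i=0}^{n-1}\calD(i,1)\calD(n-i,1)=3^{n-1}$, which is (i). If a self-contained argument is preferred, I would instead encode the identity through generating functions: writing $P(x)=\sum_{\ell\ge0}p_\ell x^\ell$ for the series counting $\{+1,0,-1\}$-walks that start at $0$ and never dip below it (so $\calD(s,1)=p_{s-1}$ and $\calD(0,1)=1$), the statement is equivalent to the quadratic functional equation $P+xP^2=\tfrac{1}{1-3x}$, which can be read off from a first-passage decomposition of the $3^{n-1}$ unrestricted length-$(n-1)$ walks.

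For (ii), I would simply peel the endpoint term off (i): since $\calD(0,1)=1$, the $i=0$ summand equals $\calD(n,1)$, whence $\sum_{i=1}^{n-1}\calD(i,1)\calD(n-i,1)=3^{n-1}-\calD(n,1)$. Unfolding Lemma \ref{D(s,1)} yields the closed form $\calD(n,1)=3^{n-1}-\sum_{i=1}^{n-1}3^{i-1}\calM_{n-i-1}$, so the right-hand side collapses to $\sum_{i=1}^{n-1}3^{i-1}\calM_{n-i-1}$, which after the substitution $j=n-1-i$ is the Motzkin sum asserted in (ii). For (iii), I would feed (i) into Corollary \ref{I_m(n) long formula}, namely $\calI_m(n)=(n+2)3^{n-2}+(m-n)\sum_{i=0}^{n-1}\calD(i,1)\calD(n-i,1)+2\sum_{k=0}^{n-3}(n-k-2)3^{n-k-3}\calM_k$: replacing the middle convolution by $3^{n-1}=3\cdot 3^{n-2}$ and combining the two multiples of $3^{n-2}$ turns the prefactor into $(n+2)+3(m-n)=3m-2n+2$, which is precisely (iii).

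I expect the sole mathematical obstacle to be (i); once it is in hand, (ii) and (iii) are routine reindexing and substitution. In the slick argument the only thing to watch is that the increment $\calI_{m+1}(n)-\calI_m(n)$ is genuinely constant in $m$, i.e.\ that every $\calD(s,1)$ with $s\le n-1$ has stabilized, which is where $m\ge n$ enters. In a self-contained treatment the delicate step is instead a clean justification of the first-passage/generating-function identity $P+xP^2=\tfrac{1}{1-3x}$, with careful handling of the $\calD(0,1)=1$ boundary term.
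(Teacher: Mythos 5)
Your proof is correct, and parts (ii) and (iii) coincide with the paper's own arguments; the genuine difference is in (i). The paper proves (i) by a fresh combinatorial construction: it takes $m=2n-2$, inserts a new row in the \emph{middle} of the table, and identifies the increment $\calI_{m+1}(n)-\calI_m(n)$ (already known to equal the convolution $\sum_{i=0}^{n-1}\calD(i,1)\calD(n-i,1)$ by the preceding theorem) with the number of perfect paths ending at the new middle blank $(n,n)$, which is $3^{n-1}$ because a reversed path from that blank has $n-1$ rows of clearance on each side and is never obstructed --- this is exactly where $m\geq 2n-2$ enters. You instead extract the increment from Theorem \ref{I_m(n)}: since every $\calD(s,1)$ with $s\leq n-1$ is stable in $m$ once $m\geq n-1$ (a path ending at $(s,1)$ can never climb above row $s$), that formula is affine in $m$ with slope $3^{n-1}$, and equating this with the convolution gives (i). Both are sound, and your route buys something: it shows $m\geq n$ already suffices, exposing the hypothesis $m\geq 2n-2$ as an artifact of the paper's middle-row construction. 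Your optional generating-function argument is also viable ($P+xP^2=\tfrac{1}{1-3x}$ is true), but note that a plain first-passage-below-zero decomposition yields the Motzkin series $M$ via $\tfrac{1}{1-3x}=P+\tfrac{xM}{1-3x}$, not $P^2$; the clean bijection behind the $xP^2$ term decomposes an unrestricted walk at the first attainment of its minimum and reverses the prefix. Finally, two caveats that are the paper's fault rather than yours: in the printed statement $\calD(i,n)$ should be $\calD(i,1)$ (your reading, and the one the paper's own proof uses --- with row $n$ the identity fails, e.g.\ $m=4$, $n=3$ gives $14\neq 9$); and in (ii) the exponent should be $n-i-2$ rather than $n-i-1$, since what you (and the paper's proof) actually derive is $\sum_{i=1}^{n-1}\calD(i,1)\calD(n-i,1)=\sum_{i=0}^{n-2}3^{n-i-2}\calM_i$ (for $n=3$ the left side is $4$ while the printed right side would be $12$), so your closing claim that your sum ``is the Motzkin sum asserted in (ii)'' is accurate only after correcting that typo.
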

\begin{proof}
(i) Let $T:=T_{m,n}$ with $m=2n-2$ and $T'$ be the table obtained by adding a new row in the middle of $T$. By Theorem \ref{I_m+1(n)-I_m(n)}, it is sufficient to obtain $\I_{m+1}(n)-\I_m(n)$. Clearly, the number of lattice paths reaching to any $(i,n)$-cell of $T$ or $T'$ is the same for all $i=1,\ldots,n-1$. On the other hand, the number of all lattice paths of $T'$ reaching at $(n,n)$-cell is $3^{n-1}$ since we may begin the paths form the last $(n,n)$-cell and apply reverse steps with no limitation until to reach the first column. Thus
\[3^{n-1}=\I_{m+1}(n)-\I_m(n)=\sum_{i=0}^{n-1}\C(i,1)\C(n-i,1).\] 
(ii) Put $\C(0,1)=1$. Then
\[\C(n,1)=3^{n-1}-\sum_{i=1}^{n-1}\C(i,1)\C(n-i,1).\]
On the other hand, by Lemma \ref{D(s,1)}, we have 
\[\C(n,1)=3^{n-1}-\sum_{i=0}^{n-2}3^{n-i-2}\M_i,\]
from which the result follows.

(iii) It follows from (i) and Corollary \ref{I_m(n) long formula}.
\end{proof}
\begin{lemma}
Inside the $n\times n$ table we have
\begin{align*}
\C_n(n,k+2)-\C_n(n,k)=\sum_{i=1}^{n-1}\left(\C_n(i,k+3)-\C_n(i,k-1)\right)
\end{align*}
for all $1\leq k\leq n$.
\end{lemma}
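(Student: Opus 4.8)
The plan is to reduce the whole identity to the one-column transfer recurrence for $\calD$ and then telescope along the columns. Since any perfect lattice path arriving at a blank $(s,t)$ with $1\le t\le n$ makes its final step from one of $(s-1,t-1)$, $(s-1,t)$, $(s-1,t+1)$, summing over all starting rows $i$ gives
\[
\calD(s,t)=\calD(s-1,t-1)+\calD(s-1,t)+\calD(s-1,t+1),
\]
with the convention that $\calD(s,t)=0$ whenever the row $t$ falls outside $\{1,\dots,n\}$. First I would record this recurrence carefully, emphasising that it is valid exactly when the endpoint row $t$ lies in $\{1,\dots,n\}$, and recall the reflection symmetry $\calD(s,t)=\calD(s,n+1-t)$ for later bookkeeping.

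Next I would form the two combinations that appear in the statement. Writing $A_s:=\calD(s,k+2)-\calD(s,k)$ and $B_s:=\calD(s,k+3)-\calD(s,k-1)$, I would subtract the recurrence for row $k$ from that for row $k+2$. The common term $\calD(s-1,k+1)$ cancels, and the four surviving terms regroup exactly as
\[
A_s=B_{s-1}+A_{s-1}.
\]
Thus each decrease of the column index by one strips off a single copy of $B$. This regrouping is the heart of the argument and is a short, routine computation once the recurrence is written down.

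Then I would telescope this relation from $s=n$ down to $s=1$, obtaining $A_n=A_1+\sum_{i=1}^{n-1}B_i$, which is precisely the asserted identity provided the base term $A_1$ vanishes. Here I would use that there is exactly one perfect lattice path ending at any in-table blank of the first column (the trivial length-one path), so $\calD(1,t)=1$ for $1\le t\le n$; hence $A_1=\calD(1,k+2)-\calD(1,k)=0$ as soon as both rows $k$ and $k+2$ lie in the table.

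The delicate point, and the step I expect to be the main obstacle, is the behaviour at the top boundary. The regrouping $A_s=B_{s-1}+A_{s-1}$ relies on expanding $\calD(s,k+2)$ through the recurrence, which is legitimate only while $k+2\le n$, and the vanishing of the base term $A_1$ requires the same. I would therefore carry out the telescoping cleanly for $1\le k\le n-2$, where both conditions hold, and handle the remaining values of $k$ separately via the convention that $\calD$ vanishes outside the table, checking each against the symmetry $\calD(s,t)=\calD(s,n+1-t)$. Getting this boundary accounting exactly right, rather than the telescoping itself, is the only part of the proof that is not automatic.
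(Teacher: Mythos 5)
Your proposal is correct and takes essentially the same route as the paper's proof: the same three-term column recurrence, the same cancellation of the middle term $\calD(s-1,k+1)$ yielding $A_s=B_{s-1}+A_{s-1}$, and the same telescoping down to the base case $\calD(1,k+2)-\calD(1,k)=0$. Your caution about the top boundary is in fact more warranted than you suggest: under the zero convention the base case (and the identity itself) genuinely fails for $k=n-1$ and $k=n$ (e.g.\ for $n=3$, $k=2$ one gets $\calD(3,4)-\calD(3,2)=-7$ while the right-hand side is $-3$), so your restriction to $1\le k\le n-2$ is exactly the correct range, and the remaining cases you deferred cannot be rescued --- a boundary defect the paper's own proof silently ignores by asserting the lemma for all $1\le k\le n$.
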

\begin{proof}
For $n=2$, the result is trivially true. For any $l<n$ we have
\begin{align*}
\C_n(l+1,k+2)&=\C_n(l,k+3)+\C_n(l,k+2)+\C_n(l,k+1)\\
\C_n(l+1,k)&=\C_n(l,k+1)+\C_n(l,k)+\C_n(l,k-1),
\end{align*}
which imply that
\[\C_n(l+1,k+2)-\C_n(l+1,k)=\C_n(l,k+3)-\C_n(l,k-1)+\left(\C_n(l,k+2)-\C_n(l,k)\right).\]
Thus
\[\C_n(n,k+2)-\C_n(n,k)=\sum_{i=1}^{n-1}\left(\C_n(i,k+3)-\C_n(i,k-1)\right)\]
as $\C_n(1,k+2)-\C_n(1,k)=0$. This completes the proof.
\end{proof}

Theorems \ref{I_m+1(n)-I_m(n)} and \ref{C(i,n)C(n-i,n)} give some formulas for the (convolution) product of an specific row with itself. Regarding columns, we get the following (more) general results.
\begin{theorem}\label{columns inner product}
Inside the $m\times n$ table, we have
\[\I_m(n)=\sum_{i=1}^m\C(a,i)\C(b,i)\]
for all $a,b\geq1$ such that $a+b=n+1$. In other words, the inner product of columns $a$ and $b$ equals $\I_m(n)$. In particular, if $n=2k-1$ is odd, then
\[\I_m(n)=\sum_{i=1}^m\C_{k,i}^2.\]
\end{theorem}
\begin{proof}
Every lattice path crosses the column $a$ at some row, say $i$. The number of such paths equals the number $\C(a,i)$ of paths from the first column to the $(a,i)$-cell multiplied by the number $\C(n-(a-1),i)=\C(b,i)$ of paths from the last column to that cell, from which the result follows.
\end{proof}
\section{Tables with few rows}
In this section, we shall compute $\I_m(n)$ for $m=1,2,3,4$ and arbitrary positive integers $n$. Also, we obtain some properties of $\I_m(n)$ for $m=5$. Some values of the $\I_3(n)$ and $\I_4(n)$ are already given in \href{https://oeis.org/A0011333}{A001333} and  \href{https://oeis.org/A055819}{A055819}, respectively.
\begin{lemma}
$\I_1(n)=1$ and $\I_2(n)=2^n$ for all $n\geq1$.
\end{lemma}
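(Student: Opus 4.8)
The plan is to prove both identities directly, by encoding each perfect lattice path through its sequence of row positions; the argument is completely transparent when the table has only one or two rows, so I would avoid invoking the heavier machinery of the earlier sections.

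First I would dispose of $\calI_1(n)=1$. In $T_{1,n}$ there is a single row, so every point of any path has $y$-coordinate $1$. Among the three steps in $\bfS$, only $(1,0)$ keeps a path inside the table: the steps $(1,1)$ and $(1,-1)$ would land at $y=2$ and $y=0$ respectively, both outside $T_{1,n}$. Hence the unique perfect lattice path is the one moving straight to the right across all $n$ columns, and therefore $\calI_1(n)=1$.

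Next I would handle $\calI_2(n)=2^n$. The key point is that a perfect lattice path in $T_{2,n}$ is determined entirely by the sequence $(r_1,\ldots,r_n)$ of rows it occupies in successive columns, with each $r_j\in\{1,2\}$. Because any two values in $\{1,2\}$ differ by at most $1$, every such sequence is realizable by a genuine path: one takes the step $(1,0)$ when $r_{j+1}=r_j$, the step $(1,1)$ when $r_{j+1}=r_j+1$, and the step $(1,-1)$ when $r_{j+1}=r_j-1$, and at no stage does a step leave the table. This gives a bijection between perfect lattice paths in $T_{2,n}$ and the $2^n$ sequences in $\{1,2\}^n$, whence $\calI_2(n)=2^n$.

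For completeness I would record an independent derivation of the second identity from Theorem \ref{I_m(n)}, as a consistency check. Inside $T_{2,n}$ the row-symmetry gives $\calD(s,1)=\calD(s,2)$, so the recurrence $\calD(s,1)=\calD(s-1,1)+\calD(s-1,2)=2\calD(s-1,1)$ together with $\calD(1,1)=1$ yields $\calD(s,1)=2^{s-1}$. Substituting this into $\calI_2(n)=2\cdot 3^{n-1}-2\sum_{s=1}^{n-1}3^{n-s-1}\calD(s,1)$ and summing the geometric series $\sum_{s=1}^{n-1}3^{n-s-1}2^{s-1}=3^{n-1}-2^{n-1}$ again recovers $\calI_2(n)=2^n$. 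I do not anticipate any genuine obstacle here; the only point demanding a little care is verifying that the lower boundary of a two-row table really forbids the step exiting at $y=0$, which is precisely what makes both the simplified recurrence and the bijection valid.
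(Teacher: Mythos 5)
Your proof is correct. The paper states this lemma with no proof at all, so there is no argument to compare against; your reasoning is the natural one that the authors evidently regarded as obvious: in $T_{1,n}$ only the step $(1,0)$ stays inside the table, and in $T_{2,n}$ a perfect lattice path is exactly an arbitrary row sequence in $\{1,2\}^n$, since consecutive entries of such a sequence automatically differ by at most one — which matches Figure \ref{Fig:1}, where $\calI_2(3)=2^3=8$. Your consistency check is also computed correctly: in the two-row table $\calD(s,1)=2^{s-1}$, the geometric sum equals $3^{n-1}-2^{n-1}$, and Theorem \ref{I_m(n)} then returns $2\cdot3^{n-1}-2\left(3^{n-1}-2^{n-1}\right)=2^n$.
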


Let $x$ and $y$ be arbitrary real numbers. By the binomial theorem, we have the following identity, 
\[x^n+y^n=(x+y)^n+\sum_{k=1}^{\lfloor\frac{n}2\rfloor} (-1)^k\left[\binom{n-k}{k}+\binom{n-k-1}{k-1}\right](xy)^k(x+y)^{n-2k},\]
where $n\geq 1$. This identity also can rewritten as
\begin{equation}\label{Binomial}
x^n+y^n=\sum_{k=0}^{\lfloor\frac{n}2\rfloor} (-1)^k\left[\binom{n-k}{k}+\binom{n-k-1}{k-1}\right](xy)^k(x+y)^{n-2k},
\end{equation}
where $\binom{r}{-1}=0$. Pell-Lucas sequence \cite{tk14} is defined as $\Q_1=1$, $\Q_2=3$, and $\Q_n=2\Q_{n-1}+\Q_{n-2}$ for all $n\geq3$. It can also be defined by the so called \textit{Binet formula} as $\Q_n=(\alpha^n+\beta^n)/2$, where $\alpha=1+\sqrt{2}$ and $\beta=1-\sqrt{2}$ are solutions of the quadratic equation $x^2=2x+1$.
\begin{lemma}\label{I_3(n)}
For all $n\geq1$ we have $\I_3(n)=\Q_{n+1}$.
\end{lemma}
\begin{proof}
The number of lattice paths to cells in columns $n-2$, $n-1$, and $n$ of $T_{3,n}$ looks like
\begin{center}
\begin{tabular}{|c|c|c|}
\hline
$n-2$&$n-1$&$n$\\
\hline\hline
$x$&$x+y$&$3x+2y$\\
\hline
$y$&$2x+y$&$4x+3y$\\
\hline
$x$&$x+y$&$3x+2y$\\
\hline
\end{tabular}
\end{center}
which imply that $\I_3(n-2)=2x+y$, $\I_3(n-1)=4x+3y$, and $\I_3(n)=10x+7y$. Thus the following linear recurrence exists for $\I_3$.
\begin{align}\label{F}
\I_3(n)=2\I_3(n-1)+\I_3(n-2).
\end{align}
Since $\I_3(1)=\Q_2=3$ and $\I_3(2)=\Q_3=7$, it follows that $\I_3(n)=\Q_{n+1}$ for all $n\geq1$, as required.
\end{proof}
\begin{corollary}
 Let $n$ be a positive integer. Then
\[\I_3(n)=\sum_{k=0}^{\lfloor\frac{n+1}2\rfloor}\left[\binom{n-k+1}{k}+\binom{n-k}{k-1}\right]2^{n-2k}.\]
\end{corollary}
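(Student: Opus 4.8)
The plan is to combine the closed form $\calI_3(n)=\calQ_{n+1}$ established in Lemma \ref{I_3(n)} with the Binet formula for the Pell--Lucas numbers and the binomial identity \eqref{Binomial}. Recall that $\calQ_{n+1}=(\alpha^{n+1}+\beta^{n+1})/2$ with $\alpha=1+\sqrt2$ and $\beta=1-\sqrt2$. The decisive observation is that these two roots satisfy $\alpha+\beta=2$ and $\alpha\beta=(1+\sqrt2)(1-\sqrt2)=-1$, so that substituting $x=\alpha$ and $y=\beta$ into \eqref{Binomial} converts the mixed factor $(xy)^k(x+y)^{n-2k}$ into a pure power of $2$, while the resulting sign $(-1)^k$ neatly cancels the sign already present in the identity.

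Concretely, I would apply \eqref{Binomial} with $n$ replaced by $n+1$, since the quantity we must express is $\alpha^{n+1}+\beta^{n+1}$. This produces
\[\alpha^{n+1}+\beta^{n+1}=\sum_{k=0}^{\lfloor\frac{n+1}2\rfloor}(-1)^k\left[\binom{n+1-k}{k}+\binom{n-k}{k-1}\right](\alpha\beta)^k(\alpha+\beta)^{n+1-2k}.\]
Inserting $\alpha\beta=-1$ replaces $(\alpha\beta)^k$ by $(-1)^k$, so that the prefactor $(-1)^k$ and this new $(-1)^k$ combine into $(-1)^{2k}=1$; inserting $\alpha+\beta=2$ replaces $(\alpha+\beta)^{n+1-2k}$ by $2^{n+1-2k}$. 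Dividing both sides by $2$ and rewriting $\binom{n+1-k}{k}$ as $\binom{n-k+1}{k}$ then gives precisely the claimed formula for $\calI_3(n)=\calQ_{n+1}$.

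There is essentially no genuine obstacle in this argument; it is a direct specialization. The only two points deserving a word of care are purely bookkeeping. First, one should note that the upper limit $\lfloor(n+1)/2\rfloor$ of the target sum is exactly the limit $\lfloor m/2\rfloor$ furnished by \eqref{Binomial} when the exponent is $m=n+1$, so the ranges match without adjustment. Second, one must invoke the convention $\binom{r}{-1}=0$ recorded alongside \eqref{Binomial}, which makes the $k=0$ term of the second binomial vanish and keeps the expression well defined at the lower endpoint.
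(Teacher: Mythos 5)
Your proposal is correct and is exactly the paper's own argument: the paper's proof consists of the single sentence ``put $x=\alpha$ and $y=\beta$ in \eqref{Binomial},'' which is precisely your specialization using $\alpha+\beta=2$, $\alpha\beta=-1$, the identity \eqref{Binomial} at exponent $n+1$, and Lemma \ref{I_3(n)}. Your write-up simply makes explicit the bookkeeping (sign cancellation, division by $2$, the convention $\binom{r}{-1}=0$) that the paper leaves to the reader.
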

\begin{proof}
 It is sufficient to put $x=\alpha$ and $y=\beta$ in \eqref{Binomial}.
\end{proof}

The Fibonacci sequence \href{https://oeis.org/A000045}{A000045} starts with the integers $0$ and $1$, and every other term is the sum of the two preceding ones, that is, $\F_0=0$, $\F_1=1$, and $\F_n=\F_{n-1}+\F_{n-2}$ for all $n\geq2$. This recursion gives the Binet's formula $\F_n=\frac{\varphi^n-\psi^n}{\varphi-\psi}$, where $\varphi=\frac{1+\sqrt{5}}2$ and $\psi=\frac{1-\sqrt{5}}2$. 
\begin{lemma}\label{I_4(n)}
For all $n\geq1$ we have $\I_4(n)=2\F_{2n+1}$.
\end{lemma}
\begin{proof}
The number of lattice paths to cells in columns $n-2$, $n-1$, and $n$ of $T_{4,n}$ looks like
\begin{center}
\begin{tabular}{|c|c|c|}
\hline
$n-2$&$n-1$&$n$\\
\hline\hline
$x$&$x+y$&$2x+3y$\\
\hline
$y$&$x+2y$&$3x+5y$\\
\hline
$y$&$x+2y$&$3x+5y$\\
\hline
$x$&$x+y$&$2x+3y$\\
\hline
\end{tabular}
\end{center}
which imply that $\I_4(n-2)=2x+2y$, $\I_4(n-1)=4x+6y$, and $\I_4(n)=10x+16y$. Hence we get the following linear recurrence for $\I_4$.
\begin{align}\label{I4n}
\I_4(n)=3\I_4(n-1)-\I_4(n-2).
\end{align}
On the other hand,
\begin{align*}
\F_{2n+1}&=\F_{2n}+\F_{2n-1}\\
&=2\F_{2n-1}+\F_{2n-2}\\
&=3\F_{2n-1}-\F_{2n-3}\\
&=3\F_{2(n-1)+1}-\F_{2(n-2)+1}.
\end{align*}
Now since  $\I_4(1)=2\F_3$ and $\I_4(2)=2\F_5$, it follows that $\I_4(n)=2\F_{2n+1}$ for all $n\geq1$. The proof is complete.
\end{proof}
\begin{corollary}
For all $n\geq1$ we have
\begin{align}
\I_4(n)=\sum_{k=0}^{n}\left(-1\right)^k\left[\frac{2n+1}{k}\binom{2n-k}{k-1}\right]5^{n-k}.
\end{align}
\end{corollary}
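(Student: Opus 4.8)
The plan is to reduce everything to a single Fibonacci number and then expand it with the binomial identity \eqref{Binomial}, exactly as was done for $\calI_3(n)=\calQ_{n+1}$ in the preceding corollary. By Lemma \ref{I_4(n)} we have $\calI_4(n)=2\calF_{2n+1}$, so it suffices to expand $\calF_{2n+1}$. The structural wrinkle is that \eqref{Binomial} expands a \emph{sum} $x^m+y^m$, whereas Binet's formula presents $\calF_{2n+1}=(\varphi^{2n+1}-\psi^{2n+1})/(\varphi-\psi)$ as a \emph{difference}. The key idea is to absorb the sign into the base: set $x=\varphi$ and $y=-\psi$, so that for the \emph{odd} exponent $m=2n+1$ one gets $x^{2n+1}+y^{2n+1}=\varphi^{2n+1}+(-\psi)^{2n+1}=\varphi^{2n+1}-\psi^{2n+1}$, converting the difference back into a genuine sum to which \eqref{Binomial} applies.

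With this choice one computes $xy=-\varphi\psi=1$ and $x+y=\varphi-\psi=\sqrt5$. Substituting into \eqref{Binomial} with $n$ replaced by $2n+1$ (so $\lfloor(2n+1)/2\rfloor=n$ and the index runs $k=0,\dots,n$), every factor $(xy)^k$ collapses to $1$, while $(x+y)^{2n+1-2k}=(\sqrt5)^{2(n-k)+1}=5^{n-k}\sqrt5$. Pulling the common $\sqrt5$ out of the sum and cancelling it against $\varphi-\psi=\sqrt5$ from Binet's formula yields
\[\calF_{2n+1}=\sum_{k=0}^{n}(-1)^k\left[\binom{2n+1-k}{k}+\binom{2n-k}{k-1}\right]5^{n-k}.\]
Combining this with $\calI_4(n)=2\calF_{2n+1}$ from Lemma \ref{I_4(n)} gives the desired expansion, once the bracketed coefficient is put into the stated closed form.

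The remaining substantive step is the purely combinatorial identity
\[\binom{2n+1-k}{k}+\binom{2n-k}{k-1}=\frac{2n+1}{2n+1-k}\binom{2n+1-k}{k}=\frac{2n+1}{k}\binom{2n-k}{k-1},\]
which I would verify by reducing all three expressions to the common factorial form $(2n+1)(2n-k)!/\bigl(k!\,(2n+1-2k)!\bigr)$; this is the same coefficient-merging that turns \eqref{Binomial} into the classical Waring form for power sums, so it is routine. The one point demanding care is the boundary term $k=0$, where $\frac{2n+1}{k}\binom{2n-k}{k-1}$ must be read through its equivalent forms, giving the value $\binom{2n+1}{0}+\binom{2n}{-1}=1$ under the convention $\binom{r}{-1}=0$ already in force for \eqref{Binomial}. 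I expect the main obstacle to be conceptual rather than computational: recognizing that the difference-of-powers form of an odd-indexed Fibonacci number can be fed into the sum-expansion \eqref{Binomial} through the substitution $y=-\psi$. After that, only the $\sqrt5$ bookkeeping and the elementary coefficient identity above remain.
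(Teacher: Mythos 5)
Your core computation is sound, and it is in fact a \emph{repair} of the paper's own proof rather than a copy of it: the paper's proof says to put $x=\varphi$ and $y=\psi$ into \eqref{Binomial}, but with that choice $x+y=\varphi+\psi=1$ and $xy=-1$, so the identity produces the Lucas numbers $\varphi^m+\psi^m$ with no powers of $5$ anywhere; your substitution $y=-\psi$, which turns the odd-power difference $\varphi^{2n+1}-\psi^{2n+1}$ into a genuine sum with $xy=1$ and $x+y=\sqrt5$, is exactly what is needed to make \eqref{Binomial} produce the shape of the stated formula. Your coefficient identity is also correct: for $k\geq 1$ both $\binom{2n+1-k}{k}+\binom{2n-k}{k-1}$ and $\frac{2n+1}{k}\binom{2n-k}{k-1}$ reduce to $(2n+1)(2n-k)!\big/\bigl(k!\,(2n+1-2k)!\bigr)$.

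The gap is in your final sentence. What your argument actually proves is
\[
\calF_{2n+1}=\sum_{k=0}^{n}(-1)^k\left[\binom{2n+1-k}{k}+\binom{2n-k}{k-1}\right]5^{n-k},
\]
so Lemma \ref{I_4(n)} gives $\calI_4(n)=2\calF_{2n+1}$, i.e.\ \emph{twice} the right-hand side of the stated corollary; ``combining'' does not give the printed statement. The discrepancy is real, not a bookkeeping slip on your side: at $n=1$ the stated sum is $5-3=2$ while $\calI_4(1)=2\calF_3=4$, and at $n=2$ it is $25-25+5=5$ while $\calI_4(2)=2\calF_5=10$. So the corollary as printed is false --- its right-hand side equals $\calF_{2n+1}$, not $\calI_4(n)$ --- and is missing an overall factor of $2$. (Note the contrast with the $\calI_3$ corollary, where the factor $2$ in $\alpha^m+\beta^m=2\calQ_m$ is absorbed by one power of $x+y=2$; here $x+y=\sqrt5$ provides no such cancellation.) A correct write-up must either insert the factor $2$ into the formula or explicitly flag the misprint; asserting that your (correct) identity yields the statement as printed is the one step of your proof that fails.
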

\begin{proof}
It is sufficient to put $x=\varphi$ and $y=\psi$ in \eqref{Binomial}.
\end{proof}

In the sequel, we obtain some properties of $C_{m,n}(s,t)$ and $\I_m(n)$, when $m=5$.
\begin{proposition}\label{D(s+2,3)=2I_5(s)-1}
Inside the $5\times n$ table we have
\[\C(s+2,1)=\I_5(s)\quad\text{and}\quad\C(s+2,3)=2\I_5(s)-1\]
for all $1\leq s\leq n$.
\end{proposition}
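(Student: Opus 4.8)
The plan is to work directly with the column-to-column recurrence of the $5\times n$ table and reduce both identities to elementary manipulations. Writing $u_s=\calD(s,1)=\calD(s,5)$, $v_s=\calD(s,2)=\calD(s,4)$, and $w_s=\calD(s,3)$ (the last equalities being the up--down symmetry $\calD(s,t)=\calD(s,6-t)$), the rule that each entry is the sum of the three entries in the adjacent previous column yields, with the convention that rows outside $\{1,\dots,5\}$ contribute $0$,
\begin{align*}
u_s&=u_{s-1}+v_{s-1},\\
v_s&=u_{s-1}+v_{s-1}+w_{s-1},\\
w_s&=2v_{s-1}+w_{s-1},
\end{align*}
together with $\calI_5(s)=2u_s+2v_s+w_s$ and the initial values $u_1=v_1=w_1=1$. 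Both claimed identities are thus statements about the sequences $u_s,v_s,w_s$.

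For the first identity I would simply iterate the recurrence for $u$ twice. Using $u_{s+2}=u_{s+1}+v_{s+1}$ and then expanding $u_{s+1}=u_s+v_s$ and $v_{s+1}=u_s+v_s+w_s$ gives $u_{s+2}=2u_s+2v_s+w_s=\calI_5(s)$, which is exactly $\calD(s+2,1)=\calI_5(s)$. The same fact has a transparent bijective reading: reversing steps and reflecting the columns identifies $\calD(s+2,1)$ with the number of paths from the corner $(1,1)$ to the last column of a $5\times(s+2)$ table; the first two steps leave the weights $(2,2,1,0,0)$ over the rows of the third column, and feeding this into the width-$s$ subtable produces $2P_1+2P_2+P_3=\calI_5(s)$, where $P_r$ is the number of paths from row $r$ of the first column to the last column and $P_r=P_{6-r}$.

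The second identity is where the constant $-1$ must be produced, and the key auxiliary fact is the relation $\calD(s,3)=2\calD(s,1)-1$, i.e.\ $w_s=2u_s-1$. I would prove this by induction on $s$: it holds for $s=1$, and since $w_s=2v_{s-1}+w_{s-1}$ while $2u_s-1=2u_{s-1}+2v_{s-1}-1$, the step $w_s=2u_s-1$ is equivalent to $w_{s-1}=2u_{s-1}-1$. Granting this, iterate the recurrence for $w$ twice: $w_{s+2}=2v_{s+1}+w_{s+1}=2u_s+4v_s+3w_s$, whereas $2\calI_5(s)-1=4u_s+4v_s+2w_s-1$. Their difference is $2u_s-w_s-1$, which vanishes by the auxiliary relation, giving $\calD(s+2,3)=2\calI_5(s)-1$.

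The only genuinely nonroutine point is isolating the auxiliary identity $w_s=2u_s-1$; once it is in hand, everything else is a two-step unfolding of the trinomial recurrence. I expect this to be the main obstacle, in the sense that without first guessing and proving $\calD(s,3)=2\calD(s,1)-1$ the appearance of the additive constant in the second formula looks mysterious.
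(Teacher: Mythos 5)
Your proposal is correct and is essentially the paper's own argument: the first identity is the two-step unfolding of the column recurrence (which is exactly what the table in Example \ref{coefficient matrix 5xn} records), and your auxiliary relation $w_s=2u_s-1$, proved by induction, is precisely the paper's key observation that $2\calD(s,1)-\calD(s,3)$ is constant and equal to $1$. The only difference is cosmetic: the paper combines that invariant with the first identity to get $\calD(s+2,3)=2\calD(s+2,1)-1=2\calI_5(s)-1$, whereas you recompute $w_{s+2}$ directly and subtract, but the underlying idea is identical.
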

\begin{proof}
From the table in Example \ref{coefficient matrix 5xn}, it follows simply that $\I_5(s)=\C(s+2,1)$ for all $s\geq1$. Also, from the table, it follows that 
\[2\C(s+1,1)-\C(s+1,3)=2\C(s,1)-\C(s,3)\]
for all $s\geq1$, that is, $2\C(s,1)-\C(s,3)$ is constant. Since $2\C(1,1)-\C(1,3)=1$, we get $2\C(s+2,1)-\C(s+2,3)=1$, from which the result follows.
\end{proof}
\begin{proposition}
Inside the $5\times n$ table we have
\[\C(s,1)\times\C(s+t,3)-\C(s,3)\times\C(s+t,1)=\sum_{i=s}^{s+t-1}\C(i,2)\]
for all $1\leq s,t\leq n$.
\end{proposition}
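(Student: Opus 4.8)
The plan is to reduce the bilinear identity to a telescoping sum by exploiting the elementary column-to-column recurrences for $\calD$ together with the linear relation already established in Proposition \ref{D(s+2,3)=2I_5(s)-1}. First I would record the transfer rules governing how the entries propagate from column $r$ to column $r+1$ inside the $5\times n$ table. Since each allowed step in $\bfS$ shifts the column index by one and the row index by $-1$, $0$ or $+1$, and since the table has five rows, the reflection symmetry $\calD(r,t)=\calD(r,6-t)$ gives
\[\calD(r+1,1)=\calD(r,1)+\calD(r,2),\qquad \calD(r+1,3)=2\calD(r,2)+\calD(r,3),\]
where the putative $\calD(r,0)$ contribution is absent because there is no row $0$, and $\calD(r,4)=\calD(r,2)$ by symmetry. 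These are the only two recurrences I will need.

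Next I would fix $s$ and regard the left-hand side as a function of the second index; that is, I introduce
\[Q(r)=\calD(s,1)\,\calD(r,3)-\calD(s,3)\,\calD(r,1),\]
so that the claim is precisely $Q(s+t)=\sum_{i=s}^{s+t-1}\calD(i,2)$. The key computation is the first difference $Q(r+1)-Q(r)$. Substituting the two recurrences above, the straight-step terms cancel and one finds $\calD(r+1,3)-\calD(r,3)=2\calD(r,2)$ together with $\calD(r+1,1)-\calD(r,1)=\calD(r,2)$, whence
\[Q(r+1)-Q(r)=\calD(r,2)\bigl(2\calD(s,1)-\calD(s,3)\bigr).\]
At this point the linear relation $2\calD(s,1)-\calD(s,3)=1$ from Proposition \ref{D(s+2,3)=2I_5(s)-1} collapses the bracket to $1$, leaving the clean increment $Q(r+1)-Q(r)=\calD(r,2)$.

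Finally I would sum this increment from $r=s$ to $r=s+t-1$. Because $Q(s)=\calD(s,1)\calD(s,3)-\calD(s,3)\calD(s,1)=0$, the telescoping yields $Q(s+t)=\sum_{i=s}^{s+t-1}\calD(i,2)$, which is exactly the asserted identity. The only genuinely non-routine ingredient is the constancy of $2\calD(s,1)-\calD(s,3)$, but that is precisely what Proposition \ref{D(s+2,3)=2I_5(s)-1} supplies, so I anticipate no serious obstacle; the main point requiring care is simply to apply the boundary convention (no row $0$) and the symmetry $\calD(r,4)=\calD(r,2)$ consistently when writing down the two transfer recurrences, since an error there would corrupt the difference computation.
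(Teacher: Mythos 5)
Your proof is correct and follows essentially the same route as the paper: both rest on the linear relation $2\calD(s,1)-\calD(s,3)=1$ supplied by Proposition \ref{D(s+2,3)=2I_5(s)-1} together with the column recurrence $\calD(r+1,1)=\calD(r,1)+\calD(r,2)$, and both conclude by telescoping to $\sum_{i=s}^{s+t-1}\calD(i,2)$. The only difference is organizational: the paper substitutes $\calD(\cdot,3)=2\calD(\cdot,1)-1$ at both indices first, reducing the left-hand side to $\calD(s+t,1)-\calD(s,1)$, and then telescopes, whereas you difference the bilinear form $Q(r)$ directly and invoke the relation once to collapse the increment to $\calD(r,2)$.
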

\begin{proof}
From Proposition \ref{D(s+2,3)=2I_5(s)-1}, we know that $\C(s,3)=2\C(s,1)-1$ for all $1\leq s\leq n$. Then
\begin{align*}
&\C(s,1)\C(s+t,3)-\C(s,3)\C(s+t,1)\\
=&\C(s,1)(2\C(s+t,1)-1)-(2\C(s,1)-1)\C(s+t,1)\\
=&2\C(s,1)\C(s+t,1)-\C(s,1)-2\C(s,1)\C(s+t,1)+\C(s+t,1)\\
=&\C(s+t,1)-\C(s,1).
\end{align*}
On the other hand,
\begin{align*}
\C(s+t,1)-\C(s,1)&=\C(s+t-1,1)+\C(s+t-1,2)-\C(s,1)\\
&=\C(s+t-2,1)+\C(s+t-2,2)+\C(s+t-1,2)-\C(s,1)\\
&\ssp\vdots\\
&=\sum_{i=s}^{s+t-1}\C(i,2)+\C(s,1)-\C(s,1)\\
&=\sum_{i=s}^{s+t-1}\C(i,2),
\end{align*}
from which the result follows.
\end{proof}

\section{Further results about lattice paths by using Fibonacci and Pell-Lucas numbers}
In this section, we obtain some relations and properties about lattice paths by the aid of Fibonacci and Pell-Lucas sequences.
\begin{proposition}\label{T(4,n) entries}
Inside the $4\times n$ table we have
\[\C(s,1)=\F_{2s-1}\quad\text{and}\quad\C(s,2)=\F_{2s}\]
for all $s\geq1$. As a result,
\[\C(s,1)\times\C(s+t,2)-\C(s,2)\times\C(s+t,1)=\C(s,2).\]
for all $s,t\geq1$.
\end{proposition}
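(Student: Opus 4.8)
The plan is to first pin down the two quantities $\calD(s,1)$ and $\calD(s,2)$ explicitly as Fibonacci numbers, and then to deduce the bilinear identity as a direct consequence of a classical Fibonacci determinant identity. The first part is really a statement about the column-to-column transfer rule for the step set $\bfS=\{(1,0),(1,1),(1,-1)\}$: the number of perfect paths reaching a blank in column $s$ is the sum of the numbers reaching the three vertically adjacent blanks of column $s-1$. In the four-row table the symmetry $\calD(s,t)=\calD(s,5-t)$ (the case $m=4$ of the relation $\calD(s,t)=\calD(s,t')$ with $t+t'=m+1$ noted earlier) collapses the four rows to two sequences $a_s:=\calD(s,1)=\calD(s,4)$ and $b_s:=\calD(s,2)=\calD(s,3)$.

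Reading the boundary row and a middle row of the transfer rule gives the coupled recurrence
\begin{align*}
a_s&=a_{s-1}+b_{s-1},\\
b_s&=a_{s-1}+2b_{s-1},
\end{align*}
with initial values $a_1=b_1=1$, since a path confined to the first column is the trivial one-point path and hence $\calD(1,t)=1$ for every $t$. (This matches the three-column table displayed in the proof of Lemma~\ref{I_4(n)}.) I would then prove $a_s=\calF_{2s-1}$ and $b_s=\calF_{2s}$ simultaneously by induction on $s$. The base case $s=1$ is $\calF_1=\calF_2=1$. For the inductive step, substituting the hypotheses yields $a_s=\calF_{2s-3}+\calF_{2s-2}=\calF_{2s-1}$ and $b_s=\calF_{2s-3}+2\calF_{2s-2}=\calF_{2s-1}+\calF_{2s-2}=\calF_{2s}$, each equality being a single use of the Fibonacci recurrence. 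This settles both explicit formulas.

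For the bilinear identity I would substitute these formulas, turning the left-hand side into $\calF_{2s-1}\calF_{2(s+t)}-\calF_{2s}\calF_{2(s+t)-1}$, and then invoke d'Ocagne's identity $\calF_m\calF_{n+1}-\calF_{m+1}\calF_n=(-1)^n\calF_{m-n}$ with $m=2s-1$ and $n=2(s+t)-1$, together with $\calF_{-2t}=(-1)^{2t+1}\calF_{2t}=-\calF_{2t}$. The one place that needs care is the bookkeeping of the parity of $(-1)^n$ and of the negative index, and this is the step I expect to be the main (purely technical) obstacle; carrying it out gives the value $\calF_{2t}$. Consequently I expect the right-hand side to be $\calD(t,2)=\calF_{2t}$ rather than $\calD(s,2)$ (the two coincide when $s=t$, which is presumably the source of the printed form). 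An alternative route that avoids Fibonacci identities altogether is to set $E(s,t):=\calD(s,1)\calD(s+t,2)-\calD(s,2)\calD(s+t,1)$ and to produce a recurrence in $t$ directly from the column relation $\calD(s+t,2)=\calD(s+t-1,1)+2\calD(s+t-1,2)$ and $\calD(s+t,1)=\calD(s+t-1,1)+\calD(s+t-1,2)$; but once the explicit Fibonacci formulas are available, the d'Ocagne reduction is by far the cleaner finish.
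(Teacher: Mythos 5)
Your derivation of the explicit formulas is exactly the paper's argument: the paper uses the same coupled recurrence $\calD(s,1)=\calD(s-1,1)+\calD(s-1,2)$ and $\calD(s,2)=\calD(s-1,1)+2\calD(s-1,2)$ (with the symmetry $\calD(s,3)=\calD(s,2)$ absorbed, as in your reduction to $a_s,b_s$) and inducts from $\calD(1,1)=\calD(1,2)=1$, so the first half of your proposal matches the paper step for step.

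On the bilinear identity you have caught a genuine error in the paper, and your version is the correct one. The paper deduces its right-hand side $\calD(s,2)$ from the claimed ``fact''
\[\calF_{2s-1}\calF_{2s+2t}-\calF_{2s}\calF_{2s+2t-1}=\calF_{2s},\]
but this identity is false: d'Ocagne's identity, with exactly your bookkeeping of the sign and the negative index $\calF_{-2t}=-\calF_{2t}$, gives $\calF_{2s-1}\calF_{2s+2t}-\calF_{2s}\calF_{2s+2t-1}=(-1)^{2s+2t-1}\calF_{-2t}=\calF_{2t}$. A numerical check confirms this: in the $4\times n$ table with $s=1$, $t=2$ one has $\calD(1,1)\calD(3,2)-\calD(1,2)\calD(3,1)=1\cdot 8-1\cdot 5=3=\calF_{4}=\calD(2,2)$, whereas the printed right-hand side would be $\calD(1,2)=1$. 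So the correct conclusion is
\[\calD(s,1)\,\calD(s+t,2)-\calD(s,2)\,\calD(s+t,1)=\calD(t,2)=\calF_{2t},\]
which coincides with the printed statement only when $s=t$, precisely as you suspected; it also restores the analogy with Proposition \ref{T(3,n) entries}, where the right-hand side $(-1)^{s+1}\calD(t,1)$ likewise depends on $t$ rather than $s$. Your proposal is therefore correct as written (either via d'Ocagne or via your alternative recurrence in $t$), and the statement itself, not your proof, is what needs amending.
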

\begin{proof}
Clearly $\C(1,1)=\C(1,2)=\F_1=\F_2=1$. Now since 
\begin{align*}
\C(s,1)&=\C(s-1,1)+\C(s-1,2),\\
\C(s,2)&=2\C(s-1,2)+\C(s-1,1).
\end{align*}
we may prove, by using induction that, $\C(s,1)=\F_{2s-1}$ and $\C(s,2)=\F_{2s}$ for all $s\geq1$. The second claim follows from the fact that
\[\F_{2s-1}\F_{2s+2t}-\F_{2s}\F_{2s+2t-1}=\F_{2s}.\]
The proof is complete.
\end{proof}
\begin{proposition}
Inside the $4\times n$ table we have
\[\I_4(2s+1)=\frac{1}{4}\I_4(s+1)^2+\C(s,2)^2\]
for all $1\leq s\leq n$.
\end{proposition}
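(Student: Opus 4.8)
The plan is to exploit the reversibility of the step set $\bfS$ and then fall back on the Fibonacci evaluations already established in Lemma~\ref{I_4(n)} and Proposition~\ref{T(4,n) entries}. The width $2s+1$ is odd, so I would first decompose every perfect lattice path in $T_{4,2s+1}$ according to the unique blank it occupies in the central column $s+1$, which has exactly $s$ columns on either side. The initial portion, from the first column to $(s+1,t)$, is counted by $\calD(s+1,t)$. For the terminal portion, from $(s+1,t)$ to the last column, I would reverse each path and then reflect the block of columns $s+1,\ldots,2s+1$ horizontally; reversal sends $\bfS$ to $-\bfS$ and the horizontal reflection sends $-\bfS$ back to $\bfS$, so this is a bijection showing that the terminal portions are also counted by $\calD(s+1,t)$. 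Multiplying the two independent counts and summing over the four rows gives
\[\calI_4(2s+1)=\sum_{t=1}^4\calD(s+1,t)^2.\]

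Next I would simplify the right-hand side using the vertical symmetry $\calD(s+1,1)=\calD(s+1,4)$ and $\calD(s+1,2)=\calD(s+1,3)$, so that $\calI_4(2s+1)=2\calD(s+1,1)^2+2\calD(s+1,2)^2$. Writing $a=\calD(s+1,1)$ and $b=\calD(s+1,2)$, the relation $\calI_4(s+1)=\sum_{t=1}^4\calD(s+1,t)=2(a+b)$ yields $\tfrac14\calI_4(s+1)^2=(a+b)^2$, and a one-line computation gives
\[\calI_4(2s+1)-\tfrac14\calI_4(s+1)^2=2a^2+2b^2-(a+b)^2=(b-a)^2.\]
It therefore remains to identify $b-a$ with $\calD(s,2)$. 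By Proposition~\ref{T(4,n) entries} this is exactly $\calF_{2s+2}-\calF_{2s+1}=\calF_{2s}=\calD(s,2)$, which is just the Fibonacci recurrence, and the proposition follows.

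The step I expect to be the main obstacle is the justification of the central-column decomposition, i.e. proving that the terminal portions are indeed counted by $\calD(s+1,t)$. This rests on two facts that must be stated carefully: that $2s+1$ is odd, so the two halves about column $s+1$ have equal width $s+1$; and that $\bfS$ is stable under the reverse-and-reflect involution, which is precisely the symmetry already invoked in the earlier theorems of Section~\ref{S}. Everything after this bijective step is routine algebra together with the two Fibonacci inputs.

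If one prefers to bypass the bijection entirely, the statement can be obtained purely arithmetically. Lemma~\ref{I_4(n)} and Proposition~\ref{T(4,n) entries} rewrite the claim as $2\calF_{4s+3}=\calF_{2s+3}^2+\calF_{2s}^2$; applying the classical identity $\calF_{2n+1}=\calF_{n+1}^2+\calF_n^2$ with $n=2s+1$ turns this into $\calF_{2s+3}^2+\calF_{2s}^2=2\calF_{2s+2}^2+2\calF_{2s+1}^2$, which is verified at once from $\calF_{2s+3}=2\calF_{2s+1}+\calF_{2s}$ and $\calF_{2s+2}=\calF_{2s+1}+\calF_{2s}$. I would present the combinatorial route as the main argument, since it explains the appearance of both the factor $\tfrac14$ and the term $\calD(s,2)^2$, and mention the arithmetic route as a quick confirmation.
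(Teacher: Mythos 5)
Your proposal is correct, but your main argument is genuinely different from the paper's. The paper's proof is exactly your arithmetic fallback: by Lemma \ref{I_4(n)} and Proposition \ref{T(4,n) entries} the claim is equivalent to the Fibonacci identity $2\calF_{4s+3}=\calF_{2s+3}^2+\calF_{2s}^2$, which the paper derives from $\calF_{2n-1}=\calF_n^2+\calF_{n-1}^2$ applied at two indices (your verification, writing $\calF_{2s+3}=\calF_{2s+2}+\calF_{2s+1}$ and $\calF_{2s}=\calF_{2s+2}-\calF_{2s+1}$ so that the two squares sum to $2\calF_{2s+2}^2+2\calF_{2s+1}^2=2\calF_{4s+3}$, is tidier; incidentally, the paper's displayed intermediate identity $\calF_{4s+1}=\calF_{2s+1}^2+\calF_{2s+2}^2$ is a typo, since that combination actually equals $\calF_{4s+3}$, though the subsequent computation uses the correct facts). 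Your combinatorial route appears nowhere in the paper: the central-column factorization $\calI_4(2s+1)=\sum_{t=1}^4\calD(s+1,t)^2$ is sound, because every perfect path meets column $s+1$ exactly once, the two halves about that column have equal width $s+1$, and your reverse-and-reflect bijection (the same device the paper uses in the proof of Lemma \ref{D(s,1)}) shows the terminal halves are also counted by $\calD(s+1,t)$; then the row symmetry $\calD(s+1,1)=\calD(s+1,4)$, $\calD(s+1,2)=\calD(s+1,3)$, the algebra $2a^2+2b^2-(a+b)^2=(b-a)^2$, and $b-a=\calF_{2s+2}-\calF_{2s+1}=\calF_{2s}=\calD(s,2)$ finish the proof. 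What your route buys: it explains structurally why the factor $\tfrac14$ and the square $\calD(s,2)^2$ appear, and its key step generalizes verbatim to $\calI_m(2s+1)=\sum_{t=1}^m\calD(s+1,t)^2$ for an arbitrary number of rows $m$; the paper's argument is shorter given the two prior results, but it is a pure Fibonacci manipulation special to $m=4$ and gives no combinatorial insight.
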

\begin{proof}
Following Lemma \ref{I_4(n)} and Proposition \ref{T(4,n) entries}, it is enough to show that
\[2\F_{4s+3}=\F_{2s+3}^2+\F_{2s}^2.\]
First observe that the equation $\F_{2n-1}=\F_n^2+\F_{n-1}^2$ yields $\F_{4s+1}=\F_{2s+1}^2+\F_{2s+2}^2$ and $\F_{4s+5}=\F_{2s+3}^2+\F_{2s+2}^2$. Now, by combining these two formulas, we obtain
\begin{align*}
\F_{2s+3}^2+\F_{2s}^2&=\F_{4s+5}+\F_{4s+1}-(\F_{2s+1}^2+\F_{2s+2}^2)\\
&=\F_{4s+4}+\F_{4s+3}+\F_{4s+1}-\F_{4s+3}\\
&=\F_{4s+3}+\F_{4s+2}+\F_{4s+1}\\
&=2\F_{4s+3},
\end{align*}
as required.
\end{proof}

\textit{Pell numbers} $\P_n$ are defined recursively as $\P_1=1$, $\P_2=2$, and $\P_n=2\P_{n-1}+\P_{n-2}$ for all $n\geq3$. The Binet's formula corresponding to $\P_n$ is $\P_n=\frac{\alpha^n-\beta^n}{\alpha-\beta}$, where $\alpha=1+\sqrt{2}$ and $\beta=1-\sqrt{2}$. 
\begin{proposition}\label{T(3,n) entries}
Inside the $3\times n$ table we have
\[\C(s,1)=\P_s\quad\text{and}\quad\C(s,2)=\Q_s\]
for all $s\geq1$. As a result,
\[\C(s,1)\times\C(s+t,2)-\C(s,2)\times\C(s+t,1)=(-1)^{s+1}\C(t,1).\]
for all $s,t\geq1$.
\end{proposition}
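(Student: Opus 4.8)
The plan is to first determine the column-to-column recurrences governing the entries of the $3\times n$ table, then identify the resulting sequences with the Pell and Pell-Lucas numbers, and finally reduce the product identity to a short closed-form computation, exactly paralleling the treatment of the $4\times n$ case in Proposition \ref{T(4,n) entries}.

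First I would record how the entries propagate. Since the admissible steps are $(1,0),(1,1),(1,-1)$, every path reaching the $(s,t)$-blank arrives from one of $(s-1,t-1)$, $(s-1,t)$, $(s-1,t+1)$, so that $\calD(s,t)=\calD(s-1,t-1)+\calD(s-1,t)+\calD(s-1,t+1)$, with out-of-range rows contributing $0$. For the $3\times n$ table this, together with the symmetry $\calD(s,1)=\calD(s,3)$ noted earlier in the paper, yields the coupled system
\[\calD(s,1)=\calD(s-1,1)+\calD(s-1,2),\qquad \calD(s,2)=2\calD(s-1,1)+\calD(s-1,2),\]
with base values $\calD(1,1)=\calD(1,2)=1$, since a path confined to the first column is a single point.

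Next I would decouple the system. Eliminating $\calD(s-1,2)$ gives $\calD(s,2)=\calD(s,1)+\calD(s-1,1)$, and substituting this back produces the single second-order recurrence $\calD(s+1,1)=2\calD(s,1)+\calD(s-1,1)$, which is precisely the Pell recurrence. With $\calD(1,1)=1=\calP_1$ and $\calD(2,1)=2=\calP_2$, an immediate induction gives $\calD(s,1)=\calP_s$. Feeding this into the relation above yields $\calD(s,2)=\calP_s+\calP_{s-1}$, and since $\calQ_s=\calP_s+\calP_{s-1}$ (equivalently, $\calD(s,2)$ obeys the same recurrence with $\calD(1,2)=1=\calQ_1$, $\calD(2,2)=3=\calQ_2$) we obtain $\calD(s,2)=\calQ_s$.

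For the product identity, substituting the values just obtained turns the claim into the Pell/Pell-Lucas identity
\[\calP_s\calQ_{s+t}-\calQ_s\calP_{s+t}=(-1)^{s+1}\calP_t.\]
I would prove this with the Binet formulas $\calP_n=(\alpha^n-\beta^n)/(\alpha-\beta)$ and $\calQ_n=(\alpha^n+\beta^n)/2$, where $\alpha=1+\sqrt2$ and $\beta=1-\sqrt2$. Expanding the left-hand side, the pure powers $\alpha^{2s+t}$ and $\beta^{2s+t}$ cancel, leaving $\frac{(\alpha\beta)^s(\beta^t-\alpha^t)}{\alpha-\beta}$; since $\alpha\beta=-1$ this collapses to $(-1)^{s+1}\calP_t$, as required. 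The base cases and inductions are routine; the only real content is this final computation, where the point to get right is the bookkeeping of the mixed terms so that the symmetric powers cancel and the factor $(\alpha\beta)^s=(-1)^s$ emerges cleanly. One could instead prove the identity by induction on $t$, but the Binet route is the shortest.
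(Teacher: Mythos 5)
Your proposal is correct and follows essentially the same route as the paper: identify $\calD(s,1)$ and $\calD(s,2)$ with the Pell and Pell--Lucas numbers by induction on a Pell-type recurrence (the paper reads the second-order recurrence $\calD(s,\cdot)=2\calD(s-1,\cdot)+\calD(s-2,\cdot)$ off the table in Lemma \ref{I_3(n)}, while you derive it by decoupling the first-order system coming from the step set and symmetry), and then reduce the product identity to $\calP_s\calQ_{s+t}-\calQ_s\calP_{s+t}=(-1)^{s+1}\calP_t$ proved via Binet's formulas. Your write-up is in fact slightly more self-contained, since you carry out the Binet computation that the paper only asserts.
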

\begin{proof}
From the table in Lemma \ref{I_3(n)}, we observe that 
\begin{align*}
\C(s,1)&=2\C(s-1,1)+\C(s-2,1),\\
\C(s,2)&=2\C(s-1,2)+\C(s-2,2)
\end{align*}
for all $s\geq3$. Now since $\C(1,1)=\P_1=1$, $\C(2,1)=\P_2=2$, $\C(1,2)=\Q_1=1$, and $\C(2,2)=\Q_2=3$ one can show, by using induction, that $\C(s,1)=\P_s$ and $\C(s,2)=\Q_s$ for all $s$.
To prove the second claim, we use the following formula
\[\P_s\Q_{s+t}-\Q_s\P_{s+t}=(-1)^{s+1}\P_t\]
that can be proved simply by using Binet's formulas.
\end{proof}
\section{Further work}
We end our paper with posing few open problems on determinant of matrices arising from lattice paths.

First consider the $m\times n$ table $T$ with $2n\geq m$. For positive integers $\ell_1,\ell_2,\ldots,\ell_{\lceil\frac{m}{2}\rceil}$, we can write $\I_m(n)$ as
\[\I_m(n)=\ell_1\I_m(n-1)+\ell_2\I_m(n-2)+\cdots+\ell_{\lceil\frac{m}{2}\rceil}\I_m(n-\lceil\frac{m}{2}\rceil).\]
Also, for positive integers $0\leq s\leq\lceil\frac{m}{2}\rceil$ and $k_{1,s},k_{2,s},\ldots,k_{\lceil\frac{m}{2}\rceil,s}$, we put
\[\I_m(n-s)=k_{1,s}x_1+k_{2,s}x_2+\cdots+k_{\lceil\frac{m}{2}\rceil,s}x_{\lceil\frac{m}{2}\rceil},\]
where $x_t=\C(n-\lceil\frac{m}{2}\rceil,t)=\sum_{i=1}^m\C^i(n-\lceil\frac{m}{2}\rceil,t)$ is the number of all lattice paths from the first column to the $(n-\lceil\frac{m}{2}\rceil,t)$-cell of $T$, for each $1\leq i\leq m$ and $1\leq t\leq\lceil\frac{m}{2}\rceil$. Utilizing the above notation, we can can write
\begin{align}\label{I_m(n) by sum of x_i}
\begin{aligned}
\I_m(n)=&k_{1,0}x_1+k_{2,0}x_2+\cdots+k_{\lceil\frac{m}{2}\rceil,0}x_{\lceil\frac{m}{2}\rceil}\cr
=&\ell_1\I_{n-1}+\ell_2\I_{n-2}+\cdots+\ell_{\lceil\frac{m}{2}\rceil}\I_{n-\lceil\frac{m}{2}\rceil}\cr
=&\ell_1(k_{1,1}x_1+k_{2,1}x_2+\cdots+k_{\lceil\frac{m}{2}\rceil,1}x_{\lceil\frac{m}{2}\rceil})\cr
&+\ell_2(k_{1,2}x_1+k_{2,2}x_2+\cdots+k_{\lceil\frac{m}{2}\rceil,2}x_{\lceil\frac{m}{2}\rceil})\cr
&\ \vdots\cr
&+\ell_{\lceil\frac{m}{2}\rceil}(k_{1,{\lceil\frac{m}{2}\rceil}}x_1+k_{2,\lceil\frac{m}{2}\rfloor}x_2+\cdots+k_{\lceil\frac{m}{2}\rceil,\lceil\frac{m}{2}\rceil}x_{\lceil\frac{m}{2}\rceil}).
\end{aligned}
\end{align}

From \eqref{I_m(n) by sum of x_i}, we obtain the following system of linear equations
\begin{equation}\label{system of equations}
\left\{
\begin{array}{ccccccc}
k_{1,1}\ell_1 &+&\cdots &+& k_{1,\lceil\frac{m}{2}\rceil}\ell_{\lceil\frac{m}{2}\rceil}&=&k_{1,0},\\
k_{2,1}\ell_1 &+&\cdots &+& k_{2,\lceil\frac{m}{2}\rceil}\ell_{\lceil\frac{m}{2}\rceil}&=&k_{2,0},\\
\vdots &\vdots &\ddots &\vdots &\vdots &\vdots &\vdots\\
k_{\lceil\frac{m}{2}\rceil,1}\ell_1 &+&\cdots &+& k_{\lceil\frac{m}{2}\rceil,\lceil\frac{m}{2}\rceil}\ell_{\lceil\frac{m}{2}\rceil}&=& k_{\lceil\frac{m}{2}\rceil,0}. 
\end{array}
\right.
\end{equation}
Now consider the following coefficient matrix $A$ of the system \eqref{system of equations}
\[A=\begin{bmatrix}
k_{1,1}&k_{1,2}&\cdots & k_{1,\lceil\frac{m}{2}\rceil}\\
k_{2,1}&k_{2,2}&\cdots & k_{2,\lceil\frac{m}{2}\rceil}\\
\vdots&\vdots&\ddots&\vdots\\
k_{\lceil\frac{m}{2}\rceil,1}&k_{\lceil\frac{m}{2}\rceil,2}&\cdots&k_{\lceil\frac{m}{2}\rceil,\lceil\frac{m}{2}\rceil}
\end{bmatrix},\]
which we call the \textit{coefficient matrix} of the table $T$ and denote it by $\C(T)$.
\begin{conjecture}
For a given $m\times n$ table $T$ ($2n\geq m$), we have $\det(\C(T))=-2^{\lfloor\frac{m}{2}\rfloor}$.
\end{conjecture}

\begin{example}\label{coefficient matrix 5xn}
Let $T$ be a $5\times n$ table. The columns $n-3$, $n-2$, $n-1$, and $n$ of $T$ are given by
\begin{center}
\begin{tabular}{|c|c|c|c|}
\hline
$n-3$&$n-2$&$n-1$&$n$\\
\hline\hline
$x_1$&$x_1+x_2$&$2x_1+2x_2+x_3$&$4x_1+6x_2+3x_3$\\
\hline
$x_2$&$x_1+x_2+x_3$&$2x_1+4x_2+2x_3$&$6x_1+10x_2+6x_3$\\
\hline
$x_3$&$2x_2+x_3$&$2x_1+4x_2+3x_3$&$6x_1+12x_2+7x_3$\\
\hline
$x_2$&$x_1+x_2+x_3$&$2x_1+4x_2+2x_3$&$6x_1+10x_2+6x_3$\\
\hline
$x_1$&$x_1+x_2$&$2x_1+2x_2+x_3$&$4x_1+6x_2+3x_3$\\
\hline
\end{tabular}
\end{center}
from which it follows that
\begin{align*}
\I_5(n-3)&=2x_1+2x_2+x_3,\\
\I_5(n-2)&=4x_1+6x_2+3x_3,\\
\I_5(n-1)&=10x_1+16x_2+9x_3,\\
\I_5(n)&=28x_1+44x_2+25x_3
\end{align*}
Clearly,
\[\I_5(n)=\ell_1\I_5(n-1)+\ell_2\I_5(n-2)+\ell_3\I_5(n-3)\]
for some $\ell_1,\ell_2,\ell_3$, and that the coefficient matrix of the table $T$ is $\C(T)=\begin{bmatrix}10&4&2\\16&6&2\\9&3&1\\\end{bmatrix}$. It is obvious that $\det(\C(T))=-2^{\lfloor\frac{5}{2}\rfloor}=-4$.
\end{example}

Our second problem is to compute the determinant of special \textit{Hankel matrices}. Recall that a Hankel matrix (or catalecticant matrix) of a numerical sequence $\C=\{c_i\}$, named after Hermann Hankel, is a matrix defined as
\[H_n^t(\C)=
\begin{bmatrix}
    c_{t} & c_{t+1} & c_{t+2} & \dots  & c_{t+n-1} \\
    c_{t+1} & c_{t+2} & c_{t+3} & \dots  & c_{t+n} \\
    \vdots & \vdots & \vdots & \ddots & \vdots \\
    c_{t+n-1} & c_{t+n} & c_{t+n+1} & \dots  & c_{t+2n-2}
\end{bmatrix}.
\]
In \cite[Theorems 3 and 4]{ck-dy}, the authors use a sequence of ideas to reduce the problem to a previous work of Cigler and Krattenthaler \cite{ci-ck} (the first paper of this series), which describes the Hankel determinants $\det H_n^1(\C)$ and $\det H_n^2(\C)$ of some similar sequences $\C$. Now, consider the sequence $\C$ with elements $1,1,2,5,13,35,96,\ldots $ (see \href{https://oeis.org/A005773}{A005773}). In the following, we suggest the values of the determinant of the Hankel matrix $H_n^0(\C)$
\begin{conjecture}
For positive integers $n$, consider the Hankel matrix
\[H_n^0(\C)=
\begin{bmatrix}
    1 & 1 & 2 & 5&\dots &c_n \\
    1 & 2 & 5 &13&\dots &c_{n+1} \\
    \vdots & \vdots & \vdots & \ddots & \vdots \\
    c_{n} & c_{n+1} & c_{n+2}&c_{n+3} & \dots  & c_{2n}
\end{bmatrix}.
\]
Then
\[
\det H_n^0(\C)=\begin{cases}
0,&n\equiv 3 \pmod 6,\\
-1,&n\equiv 4,5 \pmod 6,\\
1,&n\equiv 2,3 \pmod 6.
\end{cases}
\]
\end{conjecture}
\begin{acknowledgment}
The work of the third author is supported by the Swiss National Science Foundation project 200020-169022 ``Lift and Project Methods for Machine Scheduling Through Theory and Experiments''.
\end{acknowledgment}

\end{document}